\newtheorem{thm}{Theorem}[section]
\newtheorem{cor}[thm]{Corollary}
\newtheorem{prop}[thm]{Proposition} 
\newtheorem{lem}[thm]{Lemma}
\theoremstyle{definition}
\newtheorem{defn}[thm]{Definition}
\theoremstyle{remark}
\newtheorem{remark}[thm]{Remark}
\newtheorem{conv}[thm]{Convention}
\newcommand{\define}[1]{\emph{#1}}
\newcommand{\into}{\hookrightarrow}
\newcommand{\GL}[1]{\mathrm{GL}\left(#1\right)}
\newcommand{\verts}[1]{{\mathbf{Verts}\left(#1\right)}}
\newcommand{\edges}[1]{{\mathbf{Edges}\left(#1\right)}}
\newcommand{\bb}[1]{\mathbb{#1}}
\newcommand{\bk}[1]{\left\langle {#1} \right\rangle}
\newcommand{\gen}[1]{\mathscr{G}_{#1}}
\newcommand{\Bass}{\mathbf{Bass}}
\newcommand{\onto}{\twoheadrightarrow}
\newcommand{\calA}{\mathcal{A}}
\newcommand{\bbA}{\mathbb{A}}
\newcommand{\calD}{\mathcal{D}}
\newcommand{\calW}{\mathcal{W}}
\title{A fast algorithm for Stallings foldings over virtually free groups}
\author{
Sam Cookson\\
Nicholas Touikan}
\begin{document}

\maketitle
\begin{abstract}
    We give a simple algorithm to solve the subgroup membership problem for virtually free groups given as a graph of finite groups. For a fixed virtually free group with a fixed generating set $X$, the subgroup membership problem is uniformly solvable in time $O(N\log^*(N))$ where $N$ is the sum of the word lengths of the inputs with respect to $X$. For practical purposes, this can be considered to be linear time. The algorithm is simple enough to work out small examples by hand and concrete examples are given to show how it can be used for computations in $\mathrm{SL}(2,\mathbb Z)$ and $\GL{2,\mathbb Z}$. We also give fast algorithm to decide whether a finitely generated subgroup of a virtually free group is free.
\end{abstract}
\section{Introduction}

The concept of folding a graph was introduced by Stallings in \cite{stallings_topology_1983}, and is one of the most fundamental operations in group theory from a topological perspective.  In that paper, Stallings folding provides an algorithm that takes a collection of elements $\{f_1,\ldots,f_r\}$ of a free group $F$ and produces a \emph{folded graph} which turns out to contain the core of the covering space corresponding to the subgroup $H=\langle f_1,\ldots,f_r \rangle \leq F$.

In \cite{kapovich_stallings_2002}, Stallings folding is presented within an algorithmic framework. A notable result from that paper is a solution to the \emph{subgroup membership problem} in a free group $F(a_1,\ldots,a_r)$: given an element $f$, represented by a word, and a subgroup $H$ represented by a tuple of words corresponding to a generating set, decide if $f \in H$. The algorithm amounts to treating the graph produced by the Stallings folding algorithm as a deterministic finite automaton and then to decide whether the word representing $f$ can be read off along a loop in this folded graph.
 
In this paper, we present a Stallings folding algorithm that constructs a folded graph that is suitable to solve the subgroup membership problem in a virtually free group. The algorithm is identical to the algorithm for free groups, only that it is preceded by steps we call \emph{saturations}. This is not the first subgroup membership algorithm for virtually free groups, but the algorithm we present here is so simple that we can easily analyze its running time. As an application, using the fact that the group of $2\times 2$ invertible integer matrices $\GL{2,\mathbb Z}$ is virtually free we have the following result that is proved in Section \ref{sec:gl2z}.
\begin{thm}\label{thm:intro}
    There is an algorithm that takes as input a $2\times 2$ invertible integer matrix $A$ and a tuple $(G_1,\ldots,G_m)$ of invertible $2\times 2$ integer matrices and decides if $A$ can be expressed as some product \[
     A= G_{i_1}^{\epsilon_1}\cdots G_{i_r}^{\epsilon_r}
    \] where $G_{i_j} \in \{G_1,\ldots, G_m\}$ and $\epsilon_j \in \{-1,1\}$. This algorithm operates in time $O(N\cdot \log^*(N))$, where $N$ is the sum of the absolute values of all the matrix coefficients that appear in the input and the running time is a count of the number of pointer and integer arithmetic operations.
\end{thm}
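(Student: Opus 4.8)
The plan is to recognize that the decision problem in the statement is literally the subgroup membership problem for $\GL{2,\mathbb Z}$ and then reduce it, with careful attention to input size, to the main folding algorithm announced in the abstract. Indeed, saying that $A$ can be written as a product $G_{i_1}^{\epsilon_1}\cdots G_{i_r}^{\epsilon_r}$ with $G_{i_j}\in\{G_1,\ldots,G_m\}$ and $\epsilon_j\in\{\pm1\}$ is exactly the assertion that $A$ lies in the subgroup $H=\langle G_1,\ldots,G_m\rangle\leq\GL{2,\mathbb Z}$. So the whole task is to solve subgroup membership for $\GL{2,\mathbb Z}$ within the stated time bound.

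First I would invoke the classical fact that $\GL{2,\mathbb Z}$ is virtually free, and fix once and for all a finite generating set $X$. A convenient choice consists of $S=\left(\begin{smallmatrix}0&-1\\1&0\end{smallmatrix}\right)$ and $T=\left(\begin{smallmatrix}1&1\\0&1\end{smallmatrix}\right)$, together with a reflection such as $R=\left(\begin{smallmatrix}0&1\\1&0\end{smallmatrix}\right)$ to account for the two determinant values $\pm1$. Since $X$ is fixed, the main algorithm solves subgroup membership for $\GL{2,\mathbb Z}$ with respect to $X$ in time $O(N\log^*(N))$, where $N$ is the sum of the $X$-word lengths of the inputs.

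The heart of the matter, and the step I expect to require the most care, is the passage between the two measures of input size: the sum of matrix coefficients $n$ appearing in the statement, and the sum of word lengths $N$ demanded by the main algorithm. I would convert each input matrix into an $X$-word by running the Euclidean algorithm: given $\left(\begin{smallmatrix}a&b\\c&d\end{smallmatrix}\right)$, alternately applying powers of $T$ and the swap $S$ realizes the division-with-remainder steps driving the first column to $(\pm1,0)^{\mathsf T}$, after which a word of bounded length finishes the reduction, and $R$ is prepended when the determinant is $-1$. The resulting word is built from the partial quotients of the associated continued fraction expansion, so its $X$-length is the sum of the absolute values of those quotients. The key estimate is that this sum grows only linearly in the matrix: writing the Euclidean remainders as $r_{i-1}=q_ir_i+r_{i+1}$, the identity $q_ir_i=r_{i-1}-r_{i+1}$ telescopes to $\sum_i q_ir_i\le |a|+|c|$, and since every intermediate remainder is at least $1$ we get $\sum_i q_i\le\sum_i q_ir_i\le |a|+|c|$. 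Thus a matrix whose coefficients sum to $s$ yields an $X$-word of length $O(s)$, produced in time $O(s)$, so that $N=O(n)$ and the entire conversion costs $O(n)$.

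Finally I would assemble the pieces: converting every $G_i$ and $A$ costs $O(n)$ and produces inputs of total $X$-length $N=O(n)$, which the main algorithm then processes in time $O(N\log^*(N))$. Because $\log^*(O(n))=\log^*(n)+O(1)$, the linear change of input measure does not affect the $\log^*$ factor, and the total running time is $O(n\log^*(n))$, as claimed. The only remaining points to verify are the $\GL$-versus-$\mathrm{SL}$ bookkeeping handled by the reflection generator $R$, and that the telescoping bound is applied to each of the finitely many matrices independently before summing.
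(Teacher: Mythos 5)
Your overall strategy is the same as the paper's: interpret the statement as subgroup membership in the virtually free group $\GL{2,\mathbb Z}$, convert each matrix to a word over a fixed generating set via the Euclidean algorithm with a length bound linear in the sum of the coefficients, and then invoke the main folding machinery. Your telescoping estimate $\sum_i q_i \le \sum_i q_i r_i \le |a|+|c|$ for the partial quotients is exactly the paper's bound on the number and size of the Euclidean steps in Lemma \ref{lem:matrix-length}. However, there is one genuine gap. After the Euclidean steps drive the first column to $(\pm 1,0)^{\mathsf T}$, the matrix is upper triangular of the form $\left(\begin{smallmatrix}\pm 1 & n\\ 0 & \pm 1\end{smallmatrix}\right)$, and your claim that ``a word of bounded length finishes the reduction'' is false: clearing the entry $n$ requires a power $T^{\mp n}$, i.e.\ a word of length $|n|$, and $n$ is not controlled by the determinant (the matrix $\left(\begin{smallmatrix}1 & N\\ 0 & 1\end{smallmatrix}\right)$ already has first column $(1,0)^{\mathsf T}$ and genuinely needs $\Theta(N)$ letters; the paper's equation \eqref{eqn:tight} makes this point to show the bound is sharp). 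To close the gap you must track how the second column grows during the Euclidean steps. The paper does this using the determinant condition to show each step increases the relevant second-column entry by at most $2$, so after $s \le |a|+|c|$ steps one gets $|n| \le \max\{|b|,|d|\}+2|a|+2|c|$, and the final clearing word is still of linear length. Without some such argument your claimed conversion cost $N=O(n)$ is unproved.

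A secondary, more minor omission: Theorem \ref{thm:main} only certifies membership for words that are \emph{reduced} in the sense of Definition \ref{defn:reduced}, and your generating set $\{S,T,R\}$ is not the Bass-group alphabet of a graph-of-groups presentation. So after rewriting into that alphabet (a constant-factor blowup) you still need the fast reduction step of Proposition \ref{prop:quick-reduce} before testing membership of $A$; the paper's proof cites this explicitly. This is easily repaired by invoking the uniform membership result for arbitrary presentations of virtually free groups rather than the folding theorem alone, but as written your appeal to ``the main algorithm'' skips it.
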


Here, $\log^*(n) = \min\{m :   n\leq 2^*(m)\}$ where $2^*(n+1) = 2^{2^*(n)}$ and $2^*(0)=1$.
For practical purposes, we can assume the running time given above to be linear. This fast running time is due to the analysis of disjoint set data structures initiated by Tarjan in \cite{tarjan_efficiency_1975}.

\begin{remark}\label{rmk:logstar}
Throughout the text $\log^*$ can always be replaced by even more slowly growing functions, such as the inverse Ackermann function. We refer an interested reader to the discussion preceding \cite[Theorem 4.3]{silva_finite_2016} for further details.
\end{remark}

\begin{conv}\label{conv:running_time}
  In this paper, unless it is said otherwise, such as in Theorem \ref{thm:intro}, running times will correspond to a count of pointer operations as well as integer operations involving integers of uniformly bounded size.
\end{conv}

\subsection{Main results}

A group $G$ is virtually free if it contains a finite index subgroup $F$ that is a free group. By \cite{karrass_finite_1973}, the class of virtually free groups coincides with the class of fundamental groups of graphs of groups with finite vertex groups. A graph of groups is a construction that assembles a group out of ``smaller'' vertex groups. Every graph of groups has a \emph{fundamental group} which is well-defined up to isomorphism. Precise definitions and details are given in Section \ref{sec:graphs_of_groups}. 

If $X$ is a set of symbols, we will denote by $(X^{\pm 1})^*$, the set of all strings of symbols from the set $\{y | y=x \textrm{~ or ~} y=x^{-1}, x\in X\}$ and treat formal inverses $x^{-1}, x \in X$ as single symbols. Given a word $W \in (X^{\pm 1})^*$ we will denote by $|W|$ the length of $W$. If $G=\langle X|R\rangle$ is a group given by a presentation then we denote by $\epsilon:(X^{\pm 1})^* \to G$ the natural semigroup homomorphism sending words in $(X^{\pm 1})^*$ to products of generators and their inverses. We call this homomorphism the \emph{evaluation map.}  Throughout the text, we will be mindful of the distinction between a group element and a word that represents it.

A word $W$ is said to be \emph{freely reduced} if it has no subwords of the form $xx^{-1}$ or $x^{-1}x$. When a word represents an element of the fundamental group of a graph of groups we will say it is reduced if it satisfies the requirements of Definition \ref{defn:reduced}.

\begin{thm}\label{thm:main}
  Fix $\mathbb A$, a graph of groups with finite vertex groups over a directed graph $A$, fix disjoint tuples of symbols $\gen{v}$ associated to each $v \in \verts A$, and a fix presentation for $\Bass(\bbA)=\bk{X,R}$ that all satisfies the requirements of Section \ref{sec:precise}. Let our alphabet of symbols be \[X = \edges A \cup\left(\bigcup_{v \in \verts A}\gen v\right).\] The algorithm given in Section \ref{sec:algo} will take as input a tuple of words $(W_1,\ldots,W_n)$ in $(X^{\pm 1})^*$ %that evaluate to elements $(\epsilon(W_1),\ldots,\epsilon(W_n))$ in $\pi_1(\bbA,v)$
  and output $(\calA_F,v_0)$, a directed graph with edges labelled in $X$, with the property that $g \in \pi_1(\bbA,v)$ lies in the subgroup $H=\bk{\epsilon(W_1),\ldots,\epsilon(W_n)}$ if and only if any reduced word $U \in (X^{\pm 1})^*$ (in the sense of Definition \ref{defn:reduced}) that evaluates to $g$ can be read as the label of a closed loop in $\calA_F$ starting and ending at $v_0$.

  $\bbA$, $(\calA_F,v_0)$ can be constructed in time $O(N\log^*(N))$ from the input $(W_1,\ldots,W_n)$, where $N$ is the sum of lengths of the words in the input.
\end{thm}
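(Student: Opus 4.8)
The plan is to split the statement into a correctness claim and a running-time claim, and to handle them by separate arguments that share a common combinatorial model. For correctness I would first fix a \emph{standard} labelled object for $\bbA$: the Bass--Serre tree on which $\pi_1(\bbA,v)$ acts, or equivalently a one-vertex graph-of-groups structure $Z$ whose fundamental group is $\pi_1(\bbA,v)$ and whose edges carry labels in $X = \edges A \cup \bigl(\bigcup_{v}\gen v\bigr)$. The key is to maintain, as an invariant throughout the algorithm, a label-preserving morphism $\phi\colon \calA \to Z$ sending the basepoint to the basepoint, such that for the intermediate graph $\calA$ at any stage the set of labels of closed loops at $v_0$ evaluates under $\epsilon$ to exactly $H=\bk{\epsilon(W_1),\dots,\epsilon(W_n)}$. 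I would then verify that each elementary operation preserves this invariant: attaching a petal spelling $W_i$ makes the loop-labels generate $H$; an edge fold only identifies vertices and edges that $\phi$ already sends to the same place, making $\phi$ locally injective (an immersion) without changing the $\epsilon$-image of the loop-labels; and a saturation adds only edges whose labels are forced by the finite vertex-group relations of $\bbA$, hence does not enlarge that image.

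The backward direction of the biconditional (if $U$ reads a loop then $\epsilon(U)\in H$) then follows immediately from the invariant applied to the final graph $\calA_F$. The forward direction (if $\epsilon(U)=g\in H$ and $U$ is reduced then $U$ reads a loop) is the substantive part. Here I would use that $\calA_F$ is simultaneously folded (deterministic in the edge labels $\edges A$) and saturated (its vertex stars are closed under the finite vertex-group structure), and argue that these two conditions together make $\calA_F$ a \emph{complete} automaton for the normal forms of Definition \ref{defn:reduced}. Concretely, given a reduced $U$ representing $g\in H$, I would lift $U$ to a path in $Z$ and use the immersion property together with saturation to show that this path lifts to $\calA_F$ starting at $v_0$; determinism and $g\in H$ then force the lift to close up. This is the analogue of the classical fact that a folded core graph reads precisely the reduced words of its subgroup, with saturation supplying the extra closure needed to accommodate the vertex groups.

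For the running time with $\bbA$ fixed, I would first bound the sizes. The initial graph obtained by attaching petals has $O(N)$ vertices and edges. Because each vertex group is finite and $\bbA$ is fixed, a saturation at a vertex adds at most a constant number of new edges (the constant depending only on $\bbA$) and can be detected and applied in constant time per vertex, so saturations increase the total size by at most a constant factor. For the folds I would implement vertex identifications with a Disjoint Set (Union--Find) structure using union by rank and path compression, keeping for each class the list of incident labelled edges; the total number of fold and saturation events is $O(N)$, and the amortized cost of the associated Union--Find operations is $O(N\log^*(N))$ by the analysis of \cite{tarjan_efficiency_1975}, exactly as in \cite{touikan_fast_2006}. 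Summing the contributions yields that $\calA_F$ is produced in time $O(N\log^*(N))$.

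The step I expect to be the main obstacle is controlling the interaction between saturation and folding in both arguments. For correctness I must show that saturation terminates and produces a graph that is genuinely complete for reduced words \emph{without} over-identifying vertices, so that the forward direction holds; this requires careful normal-form and Bass--Serre bookkeeping tied to Definition \ref{defn:reduced}. For the running time I must show that the folds triggered by newly saturated edges do not cascade: a potential/amortization argument is needed to guarantee that each edge is created and processed only a bounded number of times, with the bound depending only on $\bbA$, so that the interleaving of saturation and folding still runs in $O(N\log^*(N))$ rather than blowing up.
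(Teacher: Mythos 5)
Your outline of the easy direction (an invariant showing $\pi_1(\calA,v_0)=H$ is preserved by petal attachment, saturation, and folding) matches Proposition \ref{prop:preservation}, and your running-time framework via Union--Find is the right one. But the forward direction --- the substantive half, as you say --- has a genuine gap. Your mechanism is to ``lift $U$ to a path in $Z$ and use the immersion property together with saturation to show that this path lifts to $\calA_F$,'' with ``determinism forcing the lift to close up.'' An immersion gives \emph{uniqueness} of lifts, not existence; the whole problem is to show that a given reduced word $U$ with $\epsilon(U)=g\in H$ is readable at all, and nothing in your sketch produces that path. The difficulty, which does not arise for free groups, is that $g$ has many distinct reduced representatives, and Proposition \ref{prop:preservation} only guarantees that \emph{some} loop at $v_0$ evaluates to $g$ --- its label $W'$ need not be $U$, nor even reduced. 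The paper bridges this with two substitution lemmas that are the real payoff of saturation: Lemma \ref{lem:v-substition} (because entire Cayley graphs were attached, any subpath in a $v$-component can be replaced by a path with the same endpoints spelling any $\bbA_v$-equivalent word) and Lemma \ref{lem:e-substitution} (because all Bass--Serre relator loops were attached, edge letters can be pushed across edges). These are then driven by an induction on a Van Kampen diagram for $W'U^{-1}$ over $\bk{X|R}$, deleting spurs and free faces while keeping the top boundary readable, with an $e$-strip argument using reducedness of $U$ to rule out the process getting stuck. Some argument of this kind --- relating an arbitrary reduced representative to the one you know is readable --- is indispensable, and your proposal does not contain it.

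A secondary point: you describe ``the interleaving of saturation and folding'' and worry that folds triggered by newly saturated edges might cascade. In the algorithm of Section \ref{sec:algo} there is no interleaving --- vertex and edge saturation are performed entirely up front on the bouquet, producing a graph $\calA_2$ only a constant factor (depending on $\bbA$) larger than $\calA_0$, and then the folding algorithm of \cite{touikan_fast_2006} is run once. The running-time bound therefore needs no new amortization argument beyond the one already in \cite{touikan_fast_2006}; the obstacle you flag is an artifact of an algorithm the paper does not use.
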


The reader may want to peek at Section \ref{sec:sl2z-eg} for a concrete example of how $\bbA$ should be represented and at Sections \ref{sec:algo} and \ref{sec:fold-example} to see how the algorithm works along with an example. Theorem \ref{thm:main} that is proved in Section \ref{sec:correct} is an immediate consequence of Proposition \ref{prop:algo-time} and \ref{prop:read-reduced}. The actual running time of the folding algorithm depends on the graph of groups $\bbA$. Details of this dependency are given in Proposition \ref{prop:algo-time}.

By Theorem \ref{thm:main}, deciding subgroup membership amounts to treating the based directed labelled graph $(\calA_F,v_0)$ as a deterministic finite automaton and checking if a word representing an element is accepted by this automaton. Once the automaton is constructed, verifying this takes time linear in the length of the word.

Unfortunately, if a word representing the element is not reduced in the sense of Definition \ref{defn:reduced} then it may fail to be accepted even though it represents an element of the subgroup $H$. In applications, there seems to be now way to avoid this technicality. We overcome this issue by showing that we can produce a reduced form of a word in almost linear time. The following summarizes Propositions \ref{prop:quick-reduce}, \ref{prop:detect-free}, and \ref{prop:detect-equal} proved in  Section \ref{sec:applications}.

\begin{thm}
  Let $G=\langle Y |S \rangle$ be a fixed presentation of a virtually free group. Then there are algorithms that run in time $O(N\log^*(N))$ that do the following:
  \begin{itemize}
  \item Given as input a word $U \in (Y^{\pm 1})^*$, output a reduced word (in the sense of Definition \ref{defn:reduced}) $\overline U \in (X^{\pm 1})$ where $X$ is the generating set of $\Bass(\bbA) = \langle X | R \rangle$, for some graph of groups $\bbA$ with $G \approx \pi_1(\bbA)$, and where $N=|U|$.
  \item Given as input a word $U \in (Y^{\pm 1})^*$ and a tuple of words $(W_1,\ldots,W_n)$ in $(Y^{\pm 1})^*$, decide if $\epsilon(U) \in \langle \epsilon(W_n),\ldots,\epsilon(W_n)\rangle$, where $$N = |U|+|W_1|+\cdots+|W_n|.$$
 \item Given a tuple of words $(W_1,\ldots,W_n)$ in $(Y^{\pm 1})^*$, decide if $\langle \epsilon(W_n),\ldots,\epsilon(W_n)\rangle$ is a free group , where $$N =|W_1|+\cdots+|W_n|.$$
 \item Given two tuples of words $(W_1,\ldots,W_n), (U_1,\ldots,U_m)$ in $(Y^{\pm 1})^*$, decide if $\langle \epsilon(W_1),\ldots,\epsilon(W_n)\rangle = \langle \epsilon(U_1),\ldots,\epsilon(U_m)\rangle$, where $$N =|W_1|+\cdots+|W_n|+|U_1|+\cdots+|U_m|.$$
  \end{itemize}
\end{thm}

\subsection{Relationship to other work}
In \cite{touikan_fast_2006} a fast folding algorithm was given for free groups and will be used in this paper. The subgroup membership problem for amalgams of finite groups (a special, but interesting class of virtually free groups) was first solved algorithmically by Markus-Epstein in \cite{markus-epstein_stallings_2007} and it is shown in that paper to operate in quadratic time.

In \cite{kapovich_foldings_2005}, Kapovich, Miasnikov, and Weidmann give a general abstract algorithm that constructs folded graphs for subgroups of graphs of groups. No running time analysis is given, and the underlying data structures are directed graphs equipped with an elaborate labelling system. The algorithm we present is closely related to the one in \cite{kapovich_foldings_2005}. What we call $\bbA$-graphs in this paper could actually be seen as ``blow-ups'' of the $\bbA$-graphs in that paper (in fact, in the virtually free case, both data structures can be seen to encode equivalent information). 

One result that is not shown in this paper is that the folded graphs we produce are canonical. The reason for this is that the most sensible approach to this problem involves Bass-Serre theory, and this is already done in \cite{kapovich_foldings_2005}.  It is only a matter of translating between the two approaches.

In \cite{kharlampovich_stallings_2017}, Kharlampovich, Miasnikov, and Weil, give a folding algorithm that solves the membership problem for (relatively) quasiconvex subgroups of (relatively) hyperbolic groups. This provides yet another  solution to the membership problem for virtually free groups. However, the generality of this method precludes straightforward running time analyses.

Using deeper techniques from formal language theory in the vein of the Muller-Schupp theorem \cite{muller_groups_1983},  Silva, Soler-Escrivà, and Ventura in \cite{silva_finite_2016} construct so-called Stallings sections, which in turn give rise to automata that solve the subgroup membership problem. This result is very close to our main result, Theorem \ref{thm:main}. The main difference is that we rely on an explicit decomposition of a virtually free group as the fundamental group of a graph of finite groups. For a fixed virtually free group it is shown in \cite{silva_finite_2016} that such an automaton can be constructed in in time $O(n^3 \ln^*(n))$ (see Remark \ref{rmk:logstar}.) They also show that the existence of Stallings sections characterizes virtually free groups. 

Still more recently, Lohrey in \cite{lohrey_subgroup_2021}, gives a polynomial-time algorithm for the membership problem in $\GL{2,\mathbb Z}$, where elements are represented by power words. This result is complementary to the result in this paper in the following way. In Section \ref{sec:gl2z}, where we prove Theorem \ref{thm:intro}, we consider the complexity of representing matrices as products of generators from a fixed generating set. For example, using the notation of Section \ref{sec:gl2z} the matrix\[
  M =
  \begin{pmatrix}
    1 & 1024\\0&1
  \end{pmatrix}
  \]
  is equal to $E^{-1024}$ which, in this paper, we would represent by fully writing out \[
    \underbrace{a^{-1}eb^{-1}e^{-1}\cdots a^{-1}eb^{-1}e^{-1}}_{\textrm{1024 factors}},
  \] for a total of $4\times 1024=4096$ symbols. Writing numbers in binary (or decimal) provides exponential compression and the polynomial time algorithm in \cite{lohrey_subgroup_2021} can read the input $M$ as $(a^{-1}eb^{-1}e^{-1})^{\texttt{10000000000}}$ (the exponent is written in binary) which only requires 16 symbols. Thus, in spite of running in polynomial time, the algorithm in \cite{lohrey_subgroup_2021} will provide an exponential speedup to the membership problem for certain inputs and contexts. One example of this is when we fix an upper bound on the number of matrices that generate our subgroups.%\footnote{Linton does it for fully compressed}

Although proving this is beyond the scope of the paper, it should be apparent to an expert that the folding algorithm we present also amounts to constructing the 1-skeleton of a core of a covering space $\rho: Y \to X$ (as in \cite{stallings_topology_1983}) corresponding to our subgroup, where $X$ is the graph of spaces (see \cite{scott_topological_1979}) constructed from a graph of finite groups. 

\subsection{Structure of the paper}

This paper is written to be as elementary as possible. Although we are able to get pretty far without using the Bass-Serre theory of groups acting on trees, we still need the concept of a graph of groups. In Section \ref{sec:graphs_of_groups}, we provide definitions of graphs groups, fundamental groups of graphs of groups, and how precisely these need to be encoded. In Section \ref{sec:A-graphs}, we define so-called $\bbA$-graphs and set the notation and formalism needed for the rest of the paper. In Section \ref{sec:folding_algo}, we describe the folding algorithm and analyze its running time. The real work starts in Section \ref{sec:correct}, where we show that the result of our simple saturation-then-folding algorithm actually has the advertised properties. The arguments only rely on Van-Kampen diagrams over the presentation $\Bass(\bbA) = \bk{X|R}$. In Section \ref{sec:applications} we prove some extra results using our machinery including Theorem \ref{thm:intro}.

\subsection*{Acknowledgements}
Part of the work in this paper was done by Sam Cookson while supported by an NSERC USRA. Nicholas Touikan is supported by an NSERC Discovery grant. The authors also wish to thank the anonymous referee for their careful reading of an earlier draft and for proposing numerous corrections and improvements.

\section{Graphs of groups}\label{sec:graphs_of_groups}

In this section, we set our notation for graphs of groups. For further details, we refer the reader to \cite{bogopolski_introduction_2008} for a contemporary introduction to Bass-Serre theory.

A \define{graph} $X$ consists of a set of vertices $\verts X$, a set of oriented edges $\edges X$, two maps $i : \edges X  \rightarrow \verts X$ and $t : \edges X  \rightarrow \verts X$, and a fixed-point free involution $\overline{\phantom{e}}: \edges X \rightarrow  \edges X$ satisfying $i(\overline{e}) = t(e)$. An orientation of a graph $X$ is a choice of one edge from each pair $\{e, \overline{e}\}, e \in \edges X$.  A graph where each edge is oriented is a \define{directed graph.}

\begin{defn}[Graph of groups]\label{defn:gog}
  A graph of groups $\bb A$ with underlying directed graph $A$ is
  obtained with the following additional data:
\begin{itemize}
\item To each vertex $v \in \verts A$, we assign a vertex group $\bb A_v$.
\item To each edge $e \in \edges A$, we assign an edge group $\bb A_e$ and require $\bbA_{\bar e}=\bbA_e$.
\item For each $e \in \edges A$, we have a pair of monomorphisms:
  \begin{align*}
    i_e:\bbA_e &\into \bbA_{i(e)}\\
    t_e:\bbA_e &\into \bbA_{t(e)}
  \end{align*}
  as well as the equalities $i_{\bar e}=t_e$ and $t_{\bar e}=i_e$.
\end{itemize}
\end{defn}

We have two ways of obtaining a group from a graph of groups. First, we have the \define{Bass Group}, which is defined by the following relative presentation:
\[\mathbf{Bass}(\bb A) = \left\langle
\left.\begin{array}{c}
\bb A_v, v \in \verts A;\\
e \in \edges A  
\end{array}\right.
\middle | 
\left.\begin{array}{c}
e\bar e =1, e \in \edges A;\\
\bar e i_e(g) e = t_e(g), e \in \edges A, g \in \bb A_e
\end{array}\right.\right\rangle\]

In the Bass Group, each edge in $\edges A$ is an element whose inverse is $\bar e$. 
\begin{conv}
    For the rest of this paper, we will assume that all graphs are directed. In particular, we will assume $\edges X$ only contains one element in the pair $\{e,\bar e\}$. Given $e \in \edges X$, we will write $\overline e = e^{-1}.$
\end{conv}

A \define{path in $A$} is a sequence of edges, \begin{equation}\label{eqn:path} 
e_1^{\epsilon_1}\cdots e_p^{\epsilon_p}
\end{equation} where $e_i \in \edges A$ and $\epsilon_i \in \{\pm 1\}$ and \begin{equation}\label{it:conn}
\textrm{for all $2\leq j\leq p$,  $t(e_{j-1}^{\epsilon_{j-1}})=i(e_{j}^{\epsilon_j})$}.
\end{equation} We further say that the path (\ref{eqn:path}) has \define{length} $p$. If $i(e_1^{\epsilon_1})=a$ and $t(e_p^{\epsilon_p})=b$ then we say the path is \emph{from $a$ to $b$}. A path from $b$ to $b$ is called a \emph{loop at $b$}.
 
An \define{$\bb A$--loop based at $b \in \verts A$} is a sequence of alternating of elements in $\bb A_v, v \in \verts A$ and edges in $\edges A$
  \begin{equation}
    \label{eq:1}
    a_0e_1^{\epsilon_1}a_1 \cdots e_p^{\epsilon_p}a_p
  \end{equation}
  Where the sequence $e_1^{\epsilon_1}\cdots e_p^{\epsilon_p}$ is a
  closed loop based at $b$, and where
  \begin{enumerate}
    \item $a_0,a_p \in A_b$, and
    \item $a_i \in A_{\tau(e_i^{\epsilon_i})}$, for $1\leq i \leq p$.
\end{enumerate}
$\bb A$-loops naturally correspond to elements in $\mathbf{Bass}(\mathbb A)$, and we have the following definition.

\begin{defn}[Fundamental group of a graph of groups]\label{defn:pi_1_gog}
    Let $b \in \verts A$. Then the \emph{fundamental group of $\bb A$ based at $b$}, denoted  $\pi_1(\bb A,v)$, consists of the subgroup of $\mathbf{Bass}(\bb A)$ generated by $\bb A$-loops based at $b$.
\end{defn}

For readers not familiar with the definition above, we have the following relation to the more known construction involving a choice of spanning tree in the underlying graph and where the remaining edges give rise to \emph{stable letters}. The following is a classical fact from Bass-Serre Theory.
\begin{thm}[See {\cite[Chapter 2, Theorem 16.5]{bogopolski_introduction_2008}}]
    Let $\tau \subset \edges A$ be a spanning tree and let $b \in \verts A$ be arbitrary. Then\[
    \mathbf{Bass}(\bb A)/\bk{\bk{e \in \tau}} \approx \pi_1(\bb A,b).   
    \]
\end{thm}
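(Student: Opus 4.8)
The plan is to construct an explicit retraction $r\colon \Bass(\bbA) \to \pi_1(\bbA,b)$ that collapses the spanning tree and to show that it is a two-sided inverse to the natural quotient map. Since $\tau$ is a spanning tree, for every $v \in \verts A$ there is a unique reduced edge-path $p_v$ from $b$ to $v$ inside $\tau$; set $p_b$ to be the empty path and write $p_v^{-1}$ for the reverse path. I would then define $r$ on the generators of $\Bass(\bbA)$ by
\[
  r(a) = p_v\, a\, p_v^{-1} \quad (a \in \bbA_v), \qquad r(e) = p_{i(e)}\, e\, p_{t(e)}^{-1} \quad (e \in \edges A),
\]
extending so that $r(e^{-1}) = r(e)^{-1}$; note that this makes $r(e^{\epsilon}) = p_{i(e^{\epsilon})}\, e^{\epsilon}\, p_{t(e^{\epsilon})}^{-1}$ for both signs. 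Each $r(a)$ and each $r(e)$ is (represented by) an $\bbA$-loop based at $b$, so the image of $r$ lands in $\pi_1(\bbA,b)$ as required by Definition \ref{defn:pi_1_gog}.

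First I would check that $r$ is a well-defined homomorphism, i.e. that it respects the defining relators of $\Bass(\bbA)$. On each $\bbA_v$ the map $r$ is conjugation by $p_v$, hence automatically a homomorphism, so the vertex-group relations are fine and the relation $e\bar e = 1$ holds by our choice $r(e^{-1}) = r(e)^{-1}$. The real content is the edge relator $\bar e\, i_e(g)\, e = t_e(g)$. Writing $u = i(e)$ and $w = t(e)$, the interior tree paths cancel and one computes
\[
  r(e)^{-1}\, r(i_e(g))\, r(e) = p_w\, \bar e\, i_e(g)\, e\, p_w^{-1} = p_w\, t_e(g)\, p_w^{-1} = r(t_e(g)),
\]
using the relation $\bar e\, i_e(g)\, e = t_e(g)$ that already holds in $\Bass(\bbA)$. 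This relation-check, together with the bookkeeping of edge orientations and the $\overline{\,\cdot\,}$/inverse conventions, is the one place that demands genuine care: everything rests on consecutive tree paths telescoping via $p_x^{-1}p_x = 1$, so it is conceptually simple but must be carried out cleanly, and it is the step I expect to be the main obstacle.

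Next I would establish the two identities that make $r$ inverse to the natural map $\Phi\colon \pi_1(\bbA,b) \into \Bass(\bbA) \onto \Bass(\bbA)/\bk{\bk{e \in \tau}}$. For $e \in \tau$ the tree paths satisfy $p_{t(e)} = p_{i(e)}\, e$ (or symmetrically $p_{i(e)} = p_{t(e)}\, e^{-1}$), whence $r(e) = 1$; thus $r$ kills $\bk{\bk{e \in \tau}}$ and factors as $\bar r\colon \Bass(\bbA)/\bk{\bk{e \in \tau}} \to \pi_1(\bbA,b)$. Evaluating $r$ on an $\bbA$-loop $a_0 e_1^{\epsilon_1} a_1 \cdots e_p^{\epsilon_p} a_p$ based at $b$, all interior tree paths telescope and the two outer ones equal $p_b = 1$, so $r$ restricts to the identity on $\pi_1(\bbA,b)$. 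This gives $\bar r \circ \Phi = \mathrm{id}$, and in particular $\Phi$ is injective.

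Finally, for surjectivity I would verify $\Phi \circ \bar r = \mathrm{id}$ on the quotient by testing it on the images of the generators of $\Bass(\bbA)$: the image of $r(a) = p_v\, a\, p_v^{-1}$ in the quotient is simply the image of $a$, since the tree paths die, and likewise $r(e) = p_{i(e)}\, e\, p_{t(e)}^{-1}$ maps to the image of $e$. Hence $\Phi$ and $\bar r$ are mutually inverse, which establishes the isomorphism. An alternative packaging would prove surjectivity directly — every generator of $\Bass(\bbA)$ is, modulo $\bk{\bk{e \in \tau}}$, conjugate by a tree path into an $\bbA$-loop — and deduce injectivity from the retraction, but presenting the proof as the mutually inverse pair $(\Phi, \bar r)$ is the cleanest route.
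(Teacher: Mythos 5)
Your proof is correct: the retraction $r$ collapsing the spanning tree is well defined (the only nontrivial relator check, $\bar e\, i_e(g)\, e = t_e(g)$, goes through exactly as you compute), it kills $\bk{\bk{e \in \tau}}$, it restricts to the identity on $\pi_1(\bbA,b)$ by telescoping, and the two composites are the identity. The paper does not prove this statement at all --- it cites it as a classical fact from Bass--Serre theory --- and your argument is precisely the standard proof found in the cited reference, so there is nothing to reconcile.
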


\subsection{Algorithmic specifications of graphs of groups.}\label{sec:precise}
Let $\bb A$ be a graph of groups. We will now give a specification to encode $\bb A$ that will be suitable for use in a computer.

For each $\bb A_v, v \in \verts A$, let $\gen v = (a_1^v,\ldots,a_{n(v)}^v)$ be a tuple of generators for $\bb A_v$ and consider every $a_i^w; w \in \verts A, 1\leq i \leq n(w)$ to be a distinct symbol. Furthermore for each $\bbA_v$ fix a presentation $\bbA_v = \langle \gen v| R_v\rangle$, where $R_v$ is a finite set of relations.

For each $h \in \bb A_e, e \in \edges A$, we take a fixed word $W(t_e(h))$ in $\gen{t(e)}^{\pm 1}$ representing $t_e(h) \in \bb A_{t(e)}$, and we take a fixed word $V(i_e(h))$ in $\gen{i(e)}^{\pm 1}$ representing $i_e(h) \in \bb A_{i(e)}$. 

We now take the set of generating symbols (or alphabet) \begin{equation}\label{eqn:precise_gens}
X = \edges A \cup \left( \bigcup_{v \in \verts A} \gen v\right).    
\end{equation}
In particular, we take the set of directed edges $\edges A$ to be a subset of our alphabet. For relations, we take the set of words\begin{multline}\label{eqn:BS-eqns}
R = \{e W(t_e(h) e^{-1} V(i_e(h))^{-1}: e\in \edges A, h \in \bb A_e \} \\ 
\cup \left(\bigcup_{v \in \verts A} R_v\right).
\end{multline}
It is immediate that $\mathbf{Bass}(\bb A) = \bk{ X | R}$. If $\bb A$ has a finite underlying graph and the $\bb A_v$s are all finite, then this presentation will also be finite. We will call the symbols in $\gen v \subset X$ \emph{vertex groups symbols}, the symbols in $\edges A \subset X$ \emph{edge symbols}, and we will call the relations of the form $e W(t_e(h)) e^{-1} V(i_e(h))^{-1}\in R$ \emph{Bass-Serre relations}.

The reader may note that \eqref{eqn:BS-eqns} is not economical: we do not need to include a Bass-Serre relation for every $h \in \bbA_e$; we need only consider a generating set of $\bbA_e$. It turns out that for this application, adding all of these relations does not substantially impact the running time of the algorithm, and it also makes it easier to describe. The reader may also remark that we could take $\gen v$ to be $\bb A_v$, since this latter group is finite. While this is true, and doing so again doesn't substantially impact the running time, we'll see in the next section that it makes working out small examples inconvenient.

\subsection{An example: $\mathrm{SL}(2,\mathbb Z)$}\label{sec:sl2z-eg}
It is a classical fact that $$\mathrm{SL}(2,\mathbb Z) = \left\{
  \begin{pmatrix}
    a&b\\c&d
  \end{pmatrix}
  \in M_2(\mathbb Z)
  \middle |
  ad-cb=1
\right\}$$ can be written as an amalgamated free product $\mathrm{SL}(2,\mathbb Z) = C_4*_{C_2}C_6$, where $C_n$ denotes the cyclic group of order $n$, realized as
\begin{eqnarray*}
    C_4 &=& \left\{
    \pm \begin{pmatrix}
    1&0\\0&1
    \end{pmatrix},
    \pm \begin{pmatrix}
    0&-1\\1&0
    \end{pmatrix}
    \right\}\\
    C_6 &=& \left\{
    \pm \begin{pmatrix}
    1&0\\0&1
    \end{pmatrix},
    \pm \begin{pmatrix}
    1&-1\\1&0
    \end{pmatrix},
    \pm \begin{pmatrix}
    0&1\\-1&1
    \end{pmatrix}
    \right\}\\
    C_2 &=& \left\{
    \pm \begin{pmatrix}
    1&0\\0&1
    \end{pmatrix}% ,
    % \pm \begin{pmatrix}
    % 0&-1\\1&0
    % \end{pmatrix}
    \right\},\\
\end{eqnarray*}
 see \cite[\S5]{dicks_groups_1989}. To illustrate our notation, consider the graph $A$ with $\verts A=\{u,v\}$ and $\edges A=\{e\}$ with $i(e)=u$ and $t(e)=v$. We set $a=
 \begin{pmatrix}
   0&-1\\1&0
 \end{pmatrix}$, $b=
 \begin{pmatrix}
   0&1\\-1&1
 \end{pmatrix},$ and $t =
 \begin{pmatrix}
   -1 & 0 \\ 0 & -1
 \end{pmatrix}$.

 We take abstract presentations $\bbA_u = \langle a | a^4\rangle, \bbA_v = \langle b | b^6\rangle$, and $\bbA_e = \langle t | t^2\rangle$, and we set $i_e(t)=a^2$ and $t_e(t)=b^3$. This is enough information to encode the graph of groups $\bbA$ depicted in Figure \ref{fig:sl2z-gog}.
 \begin{figure}[htb]
   \centering
   
   \begin{tikzpicture}[node distance = 0.5cm]%[scale=0.5]
     \coordinate (eww) at (0,0);
     \coordinate (vee) at (2,0);
     \draw[fill = black] (eww) circle (0.1);
     \draw[fill = black] (vee) circle (0.1);
     \node (u) [left of = eww] {$u$};
     \node (A_u) [above of = eww] {$\langle a | a^4\rangle$};
     
     \node (v) [right of = vee] {$v$};
     \node (A_v) [above of = vee] {$\langle b | b^6\rangle$};

     \draw (eww) --node{$\blacktriangleright$} node[above]{$\langle t|t^2\rangle$}node[below]{$e$} (vee);
     \node (t) at (1,-0.5) {$t$};
     \node (a2) at (0,-0.5) {$a^2$};
     \node (b3) at (2,-0.5) {$b^3$};
     \draw[|->] (t) --node[below]{$i_e$} (a2);
     \draw[|->] (t) --node[below]{$t_e$} (b3);
     
   \end{tikzpicture}
   \caption{A graph of groups $\bbA$ for $\mathrm{SL}(2,\mathbb Z)$.}
   \label{fig:sl2z-gog}
 \end{figure}
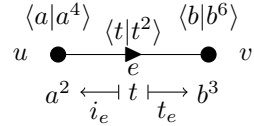

 For our algorithmic specification, we can take in addition to the graph $A$, $\gen u = \{a\}$ and $\gen v = \{b\}$ so that $X = \{a,b,e\}$ and $R = \{a^4,b^6,eb^3e^{-1}a^{-2}\}$ and $\Bass(\bbA) = \langle X | R \rangle$. We note that the symbol $t$ that we used to designate the generator of $\bbA_e$ doesn't actually occur in our alphabet $X$. Strings of symbols in $X^{\pm 1}$ represent elements $\Bass(\bbA)$, and those that are $\bbA$-loops based at $u$ are elements if $\pi_1(\bbA,u)$. For example $a^3eb^2e^{-1}a \in \pi_1(\bbA,u) \approx \mathrm{SL}(2,\mathbb Z)$,
 and $ab^2ebe \in \Bass(\bbA)\setminus \pi_1(\bbA,u)$.

\section{Words, evaluations, $\bb A$-graphs and the subgroups they define}\label{sec:A-graphs}

Let $X$ be a set of symbols. A \emph{directed $X$-labelled graph $\cal D$} is a directed graph equipped with a function $\mu:\edges{\cal D} \to X$, assigning to each edge a symbol in $X$.

Let $\rho = e_1^{\epsilon_1}\cdots e_p^{\epsilon_p}$ be path in $D$. Then we will abuse notation and denote the \define{label of $\rho$} as\[
\mu(\rho) = \mu(e_1)^{\epsilon_1}\cdots \mu(e_p)^{\epsilon_p} \in \left(X^{\pm 1}\right)^*.
\]

\begin{conv}\label{conv:gog}
    For the rest of the paper, we shall fix the (directed, unlabelled) graph $A$, the graph of groups $\bb A$, a generating set $X$, as given in (\ref{eqn:precise_gens}), and a set of relations $R$, as given in (\ref{eqn:BS-eqns}), all conforming to the specifications of Section \ref{sec:precise}. We assume $\Bass(\bbA)=\langle X | R \rangle$.
\end{conv}

Given some word $W=x_1^{\epsilon_1}\ldots x_l^{\epsilon_l} \in (X^{\pm 1})^*$, we define the  \define{evaluation of $\epsilon(W) \in \Bass(\bbA)$} to be the element\[
    \epsilon(W) = x_1^{\epsilon_1}\ldots x_l^{\epsilon_l} \in \Bass(\bbA)
\] where the right-hand side is seen as a long product of elements in $\Bass(\bbA)$. We introduce this concept to distinguish between different words, say the words $xx^{-1}x$ and $x$, even though they both evaluate to the same group element.

Given some path $\rho$ in $\cal D$, a directed $X$-labelled graph as above, abusing notation again, we define the \define{evaluation $\epsilon(\rho)$} to be the element\[
    \epsilon(\rho) = \epsilon(\mu(\rho)) \in \Bass(\bbA).
\]
 
A directed $X$-labelled graph $\cal A$ is called \define{an $\bb A$-graph} if there is a mapping 
\begin{align*}
    \verts{\cal A} &\to \verts A \\ v &\mapsto [v]
\end{align*}
such that the label of any loop $\ell$ in $\cal A$ based at $v \in \verts{\cal A}$ has an evaluation $\epsilon(\ell) \in \pi_1(\bb A,[v]).$ In particular, this implies that for any edge $f \in \edges{\cal A}$ with label $a^v_i \in \gen v$ with $i(f)=u,t(f)=w$, we have $[u]=[w]=v$. We also have that for any edge $f \in \edges{\cal A}$ with label $e \in \edges A$, if $i(f)=u,t(f)=w$, then $[u]=i(e),[w]=t(e)$ respectively.

Given an $\bb A$-graph $\cal A$ and a vertex $v \in \verts{\cal A}$, we wish to define two sets. First, the \define{language accepted by $(\mathcal A,v)$} is denoted by and defined as:
\[
L(\mathcal A,v) = \left\{ \mu(\ell) \in (X^{\pm 1})^* | \ell \textrm{ is a loop in $\cal A$ based at $v$}\right\}.
\] For $U,V \in (X^{\pm 1})^*$, we write $U\sim V$ if $U = _{F(X)} V$, i.e. if they define the same element of the free group $F(X)$ or, equivalently, if they can both be freely reduced to a common element. We use the following notation:
\[
\overline{L(\mathcal A,v)} = L(\mathcal A,v)/\sim
\]
Abusing notation, we also have the \define{subgroup defined by $(\mathcal A,v)$} that we denote as and define by:
\[
\pi_1(\mathcal A,v) = \left\{ \epsilon(\ell) \in \pi_1(\bb A,[v]) | \ell \textrm{ is a loop in $\cal A$ based at $v$}\right\}.
\]
We also note the following obvious equalities\[
\pi_1(\mathcal A,v) = \epsilon(L(\mathcal A,v)) = \epsilon\left(\overline{L(\mathcal A,v)}\right). 
\]

Finally, if $W \in (X^{\pm 1})^*$ evaluates to an element in $\pi_1(\bb A,v) \leq \Bass(\bb A)$, then we define its \emph{syllable decomposition} to be a factorization:\[
 W=W_0e_1^{\epsilon_1}\cdots e_n^{\epsilon_n} W_{n}
\] where $e_j \in \edges A$, $\epsilon_j \in \{-1,1\}$ and $W_j \in (\gen{u_j}^{\pm 1})^*$, where $u_j = i(e_{j+1})$ if $\epsilon_j=1$ and $u_j = t(e_{j+1})$ if $\epsilon_j = -1$, and where $W_n \in (\gen{u_0}^{\pm 1})^*$. We also must have that $e_1^{\epsilon_1}\cdots e_n^{\epsilon_n}$ is a loop starting and ending at the basepoint $v$.

It is clear from the specification of $X$ that this factorization of $W$ is well-defined. The following definition makes sense in the context of Convention \ref{conv:gog}.

\begin{defn}\label{defn:reduced}
    A word in $(X^{\pm 1})^*$ is said to be \emph{cancellable} if it is of the form
    \begin{equation}\label{eqn:reducible-subwords}
    e W  e^{-1}\textrm{ or } e^{-1} V e
    \end{equation} where $e \in \edges A$, $W \in (\gen{t(e)}^{\pm 1})^*$ and $\epsilon(W) \in t_e(\bb A_e) \leq \bb A_{t(e)}$ or $V \in (\gen{i(e)}^{\pm 1})^*$ and $\epsilon(V) \in i_e(\bb A_e) \leq \bb A_{i(e)}$. A word in $(X^{\pm 1})^*$ is said to be \emph{reduced} if it is freely reduced and contains no cancellable subwords.
\end{defn}

The following fact is immediate from the definitions.
\begin{lem}\label{lem:reduced-minmal}
    Every cancellable word evaluates to a word in $\bbA_v$ for some $v \in \verts A$. If $\epsilon(W) \in \pi_1(\bbA,v)$ but $W$ is not a reduced word, then there is a word $W'$ such that $\epsilon(W')=\epsilon(W)$ but with fewer symbols from $\edges A$. In particular every element of $\pi_1(\bbA,v)$ can be written as a reduced word.
\end{lem}

\subsection{Folding and morphisms}\label{sec:foldings}

Let $\calD,\calD'$ be two directed $X$-labelled graphs. A combinatorial function $f:\calD \to  \calD'$ that maps vertices to vertices and edges to edges such that $f(i(e))=i(f(e))$ and $f(t(e))=t(f(e))$ and $\mu(f(e))=\mu(e)$ for all $e \in \edges{\calD}$ is called a \emph{morphism}. We will also think of a morphism as being realized as a continuous map between the topological realizations of $\calD$ and $\calD'$. Obviously, a composition of morphisms is a morphism. If $\rho$ is a path in $\calD$ joining vertices $u,v$ then it has a well-defined image $f\circ\rho$, which is a path in $\calD'$ joining $f(u),f(v)$. By definition, we have $\mu(f\circ\rho) = \mu(\rho)$. When there is no risk of confusion, we will simply call $f\circ\rho$ the \emph{image of $\rho$ in $\calD'$}.

A \emph{folding at $v \in \verts \calD$} is a morphism $f:\calD \to \calD'$ that is as follows: at the vertex $v$, there is a pair of edges $e,e'$ such that $\mu(e)=\mu(e')$ and either $i(e)=i(e')=v$ or $t(e)=t(e')=v$. Then $\calD'$ is the quotient graph $\calD/\sim$ obtained by identifications $e\sim e'$, $i(e)\sim i(e')$, and $t(e)\sim t(e')$. The folding morphism is the quotient map $f:\calD \to \calD'$. We say that $\calD$ is \emph{completely folded} if there are no possible foldings, i.e. at every vertex, there is at most one adjacent edge with a given label and incidence. A \emph{folding process} is a sequence of folding morphisms that terminates in a completely folded graph.

If we name some vertex, say $v_0 \in \verts{\calD}$, then we will adopt the computer science convention of using the same notation to designate the image of that vertex throughout a folding process.  

\section{The folding algorithm}\label{sec:folding_algo}
    For this section, we fix a directed graph $A$ and a based  graph of groups $(\bb A,v)$. Additionally, we want:
    \begin{itemize}
        \item A set of symbols $X$ as given in \eqref{eqn:precise_gens} in Section \ref{sec:precise}, containing $\edges A$.
        \item The set of relations to be as given in \eqref{eqn:BS-eqns} in Section  \ref{sec:precise}.
        \item We will also assume that from the two items above, we will have constructed Cayley graphs $\Gamma_v = \mathrm{Cay}_{\gen v}(\bb A_v)$ for each finite vertex group with respect to our chosen generating set.
    \end{itemize}

    The purpose of this algorithm is to take a collection of reduced words defining elements that generate a subgroup $H$ of $\pi_1(\bb A,v)$ and output an $\bb A$-graph $(\mathcal A_F,v_F)$ with the property that if $W$ is any reduced word (in the sense of Definition \ref{defn:reduced}) in $(X^{\pm 1})^*$ representing an element $h \in H$ if and only if there is a unique loop $\ell_W$ in $\mathcal A_F$ based at $v_F$ with $\mu(\ell_W)=W.$ In other words, $W$, if it is reduced, can be read off directly in $\mathcal A_F$ starting and ending at $v_F$.

    In this section, we will give the algorithm and analyze its running time. The proof of correctness will be in the next section. We start by recalling some details about the classical folding algorithm.

\subsection{The fast folding algorithm in \cite{touikan_fast_2006}}\label{sec:orig-algo-description}

    The original Stalling folding algorithm implements a folding process on directed $X$-labelled graphs as described in Section \ref{sec:foldings}. The basic implementation of these graphs uses adjacency lists, i.e.  every vertex has a list of incident vertices and each edge has a pointer to an initial vertex, a terminal vertex as well as a label from $X$. The fast running time follows from two the use of two data structures:
    \begin{itemize}
    \item \textbf{Doubly linked lists.} Removal and insertion of a given element from a list as well as concatenation of lists uses a fixed number of pointer operations (irrespective of the size of the list.)
    \item \textbf{Disjoint sets.} Given a set $S$ we can think of equivalence classes or partitions of $S$. Consider a collection of equivalence classes where each class has a distinguished representative. \cite{tarjan_efficiency_1975} provides operations that given $x,y \in S$ will merge the equivalence classes to create the larger class $[x]\cup[y]$ as well as an operation to find the distinguished representative of the equivalence class containing an element $y$. It is shown that a sequence of $r$ of these \textit{merge} and \textit{find} operations has an amortized $O(r\log^*(r))$ running time.
    \end{itemize}

    In \cite{touikan_fast_2006} extra data and pointers are associated to the objects making up directed $X$-labelled graphs in order to take advantage of these data structures and associated operations.  It is shown that a folding operation, as described in Section \ref{sec:foldings}, can be realized using a fixed number of doubly linked lists and disjoint set operations. We now describe the folding process.

    Once an initial graph (a bouquet of generators) is constructed from the input, a (doubly linked) list called \texttt{UNFOLDED} is created and is arranged to contain all vertices that have two adjacent edges with the same label and incidence, i.e. vertices at which a folding can occur. This list may also contain extra vertices that do not have this property without impacting the correctness of the algorithm. The algorithm loops until \texttt{UNFOLDED} is empty. For each loop, a vertex $v$ from \texttt{UNFOLDED} is taken and $v$'s edge list is enumerated. Two things can happen:
    \begin{itemize}
    \item $v$ is found to have no adjacent edges with the same incidence and label and $v$ is removed from the list. This possibility involves $O(|X|)$ doubly linked list and disjoint set operations.
    \item Two edges $e,e'$ adjacent to $v$ are found to have the same label and incidence. If this occurs we stop the enumeration of the list, thus at most $O(|X|)$ doubly linked list and disjoint set operations will be used. The next step is to perform the folding operation (that uses a constant number of operations), a new vertex may be added to \texttt{UNFOLDED} and the total number of edges decreases.
    \end{itemize}

    Each time such a loop runs either a vertex gets removed from \texttt{UNFOLDED} or an edge gets deleted. The number of times a vertex can be removed  from \texttt{UNFOLDED} is no more than the original number of vertices in \texttt{UNFOLDED} plus the number of times some vertex gets added to the list (a vertex may be added and then removed multiple times). Since each time a vertex gets added to the list a folding must have occurred, decreasing the number of edges, we see that vertices are removed from \texttt{UNFOLDED} at most $V_0+E$ times, where $V_0+E$ is the sum of the number of vertices initially in the list \texttt{UNFOLDED} and the number of edges of the initial graph. It follows that the total running time is \begin{equation}\label{eqn:orig-bound}O\left(|X|(V_0+E)\log^*(|X|(V_0+E)))\right).
    \end{equation}

    \subsection{$\bb A$-graph folding algorithm}\label{sec:algo}

    The algorithm we present is the modification of the algorithm given in \cite{touikan_fast_2006} by adding the vertex and edge saturation steps.
    
    \begin{itemize}
        \item \textbf{Input:} A tuple $(W_1,\ldots,W_n)$ of words in $(X^{\pm 1})^*$ representing elements of $\pi_1(\bb A,v)$.
        \item \textbf{The algorithm:}
        \begin{enumerate}
            \item \textit{Form a bouquet of generators.}\label{it:form-bouquet} For each $W_i$, make a linear $X$-labelled directed graph along which we can read $W_i$. Identify all the endpoints of these linear graphs to create a graph that is a bouquet of circles $\mathcal A_0$ with a basepoint $v_0$
            \item \textit{Vertex saturation.}\label{it:vert-saturation} For each vertex $v \in \verts{\mathcal A_0}$, if $v$ is adjacent to an edge with a label in $\gen{u}$, we define $[v]=u \in \verts{A}$ and attach a copy of the Cayley graph $\Gamma_{[v]}$ to $v$. %\footnote{define attach?} 
            If $v$ is adjacent to an edge $f$ such that $t(f)=v$ or $i(f)=v$ and $\mu(f)=e \in \edges A$, then attach a copy of the Cayley graph $\Gamma_{t(e)}$ or $\Gamma_{i(e)}$, respectively, to $v$. Call this new graph $\mathcal A_1$.
            \item \textit{Edge saturation.}\label{it:edge-saturation} For every edge $f \in \edges{\mathcal A_1}$ let $\mu(f) = e\in \edges A$. For each relation $r=eW(t_e(h))e^{-1}V(i_e(h))^{-1}$ involving $e$ occurring in $R$, attach a loop $\ell_r$ based at $i(e)$ whose label is $\mu(\ell_r)=r.$ Call the resulting graph $\mathcal A_2$.
            \item\textit{Stallings folding.}\label{it:fold} Apply the Stallings folding algorithm of \cite{touikan_fast_2006} to the $X$-labelled directed graph $\mathcal A_2$, using $X$ as the labelling alphabet and ensuring at initialization that all vertices in $\verts{\mathcal A_0} \subset \verts{\mathcal A_2}$ are in the list \texttt{UNFOLDED}. \textbf{Return} the folded graph $\mathcal A_F$.            
        \end{enumerate}
    \end{itemize}

    \subsection{Analysis of the running time}\label{sec:algo-time}
    We now prove the first half of the main result Theorem \ref{thm:main}.

    \begin{prop}\label{prop:algo-time}
      The algorithm given in Section \ref{sec:algo} runs in time \[O\left(|X|N(K+M+2))\log^*(|X|N(K+M+2))\right)\] where $N$ is the sum of the lengths of the input words, $M = \max_{v \in V(A)}\left(|\gen v|\cdot |\bbA_v|\right)$ and $K = \max_{r \in R}\left(|r|\right)$, where $X,R$ are the given generators and relations in the provided $\Bass(\bbA)) = \langle X | R \rangle.$
    \end{prop}
    \begin{proof}
      The graph $\calA_0$ has at most $N$ vertices and $N$ edges. Vertex saturation adds at most $N\cdot M$ edges and edge saturation adds at most $K\cdot N$ edges to the graph. It follows that $\calA_2$ has at most $N(K+M+1)$ edges. In $\calA_2$ the only vertices that may have multiple edges with the same label and incidence are the vertices in $\calA_0$. Thus by equation (\ref{eqn:orig-bound}), setting $V_0=N$ and $E=N(K+M+1)$, the algorithm will have a total running time of\[
        O\left(|X|N(K+M+2)\log^*(|X|N(K+M+2))\right).
        \]
Since the list \texttt{UNFOLDED} contained all possible vertices at which a folding could be performed in $\calA_2$, the algorithm will correctly terminate in a folded graph.
\end{proof}

    \subsection{An example}\label{sec:fold-example}
    We continue the example of $\mathrm{SL}(2,\mathbb Z)$ started in Section \ref{sec:sl2z-eg}, keeping the same notation. Consider $H = \langle a^3,aeb^2e^{-1}a^2\rangle \leq \pi_1(\bbA,u)$. The first step is to construct $\calA_0$ and then perform vertex and edge saturation. The resulting directed labelled graphs are depicted in figures \ref{fig:A_0+A_1} and \ref{fig:A_2+A_f}. In our example, since the edge groups have only one non-trivial element, edge saturations only involve attaching one ``rectangle'' to the initial vertex of each $\edges A$-labelled edge.
      \begin{figure}[htb]
        \centering
        \includegraphics[width=0.3\textwidth]{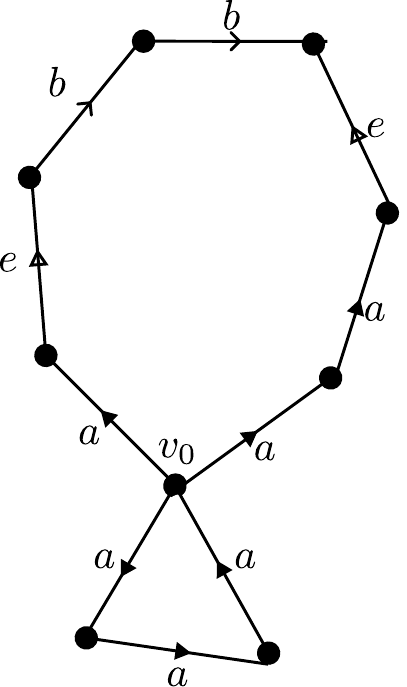}\hspace{2cm}
        \includegraphics[width=0.45\textwidth]{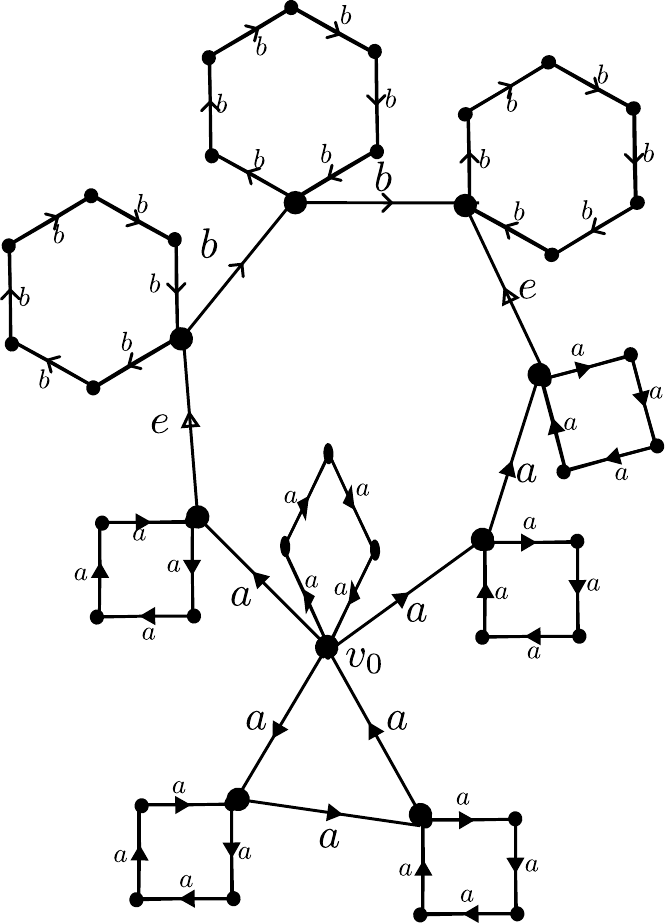}
        \caption{The $\calA_0$ from the input $(aaa,aebbe^{-1}a^{-1}a^{-1})$ and $\calA_1$, the result of attaching a copy of the appropriate Cayley graph at every vertex of $\calA_0$.}
        \label{fig:A_0+A_1}
      \end{figure}

      \begin{figure}[htb]
        \centering
        \includegraphics[width=0.5\textwidth]{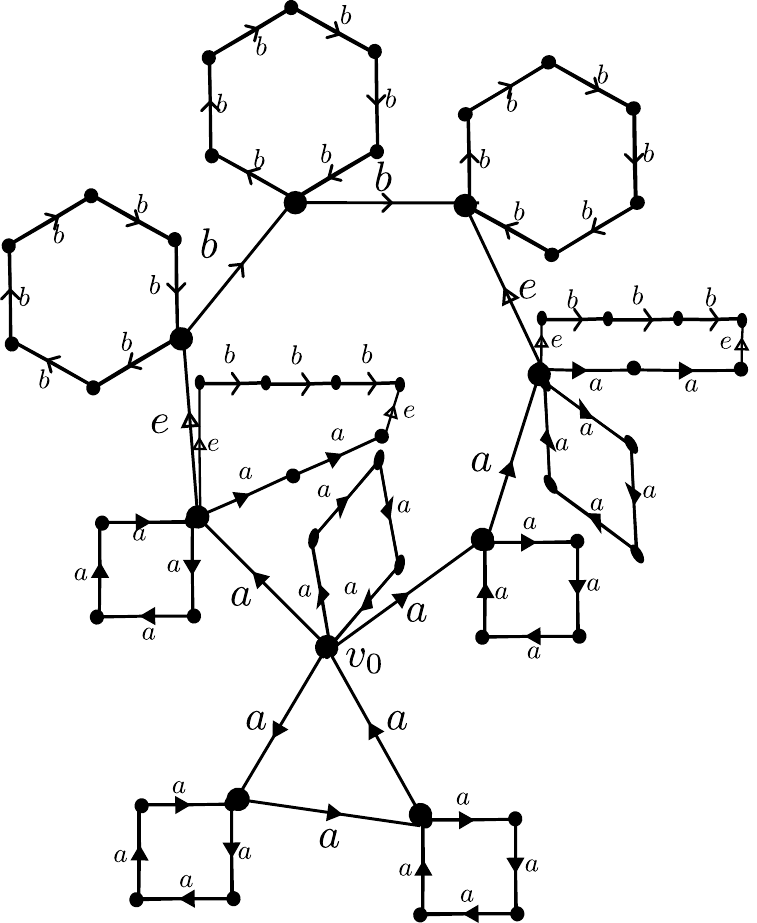}\hspace{2cm}
        \includegraphics[width=0.1\textwidth]{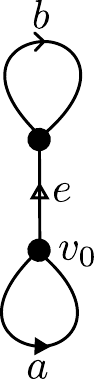}
        \caption{The graph $\calA_2$ and the final folded graph $\calA_F$, obtained by the standard folding algorithm.}
        \label{fig:A_2+A_f}
      \end{figure}

      In particular, from the graph $\calA_F$, we see that, for example, the element given by the word $W=a^2eb^3e^{-1}a^{-1}$, as we can read this word as a loop starting and ending at $v_0$. In fact, we see that $H = \pi_1(\bbA,u)$. In general, it may be that an unreduced word representing an element of the subgroup may not be readable in the final folded graph $\calA_F$.
\section{Proof of correctness}\label{sec:correct}
In this section, we show that the algorithm given in Section \ref{sec:algo} produces an output with the desired properties. The argument we give is purely combinatorial, but it is guided by the following idea: if the Cayley graphs we attach during vertex saturation were actually Cayley 2-complexes (i.e. we add 2-cells so that they are simply connected and admit a free action by the $\bbA_v$) and if the loops we add during edge saturation actually bounded 2-cells, then any path in $\calA_F$ could be homotoped to a path with reduced label.

\begin{conv}
    For this section, we will fix $\pi_1(\bbA,v)$, a tuple $(W_1,\ldots,W_n)$ of input words, as well as the subgroup $H \leq \pi_1(\bbA,v)$.
\end{conv}

\begin{prop}\label{prop:preservation}
    Let $\calA_F$ be the $\bbA$-graph produced by the algorithm in Section \ref{sec:algo}, and let $v_0$ be its basepoint. Let $(W_1,\ldots,W_n)$ be the tuple of input words. Then we have an equality of subgroups\[
    \pi_1(\calA_F,v_0) = \langle \epsilon(W_1),\ldots,\epsilon(W_n)\rangle =H \leq \pi_1(\bbA,v).
    \]
\end{prop}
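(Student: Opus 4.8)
The plan is to show that none of the three operations performed by the algorithm --- vertex saturation, edge saturation, and Stallings folding --- changes the subgroup $\pi_1(\,\cdot\,,v_0) \leq \Bass(\bbA)$ defined by the based graph. Since $\calA_0$ is a bouquet of loops spelling $W_1,\ldots,W_n$ at $v_0$, we have immediately $\pi_1(\calA_0,v_0) = \langle \epsilon(W_1),\ldots,\epsilon(W_n)\rangle = H$, so the proposition follows by tracking the subgroup along the chain $\calA_0 \to \calA_1 \to \calA_2 \to \calA_F$ and checking that each arrow preserves it.

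First I would isolate one lemma that handles both saturation steps at once: if $\calD'$ is obtained from a based $X$-labelled graph $(\calD,v_0)$ by wedging on, at a single vertex $w$, a connected $X$-labelled graph $\Gamma$ every one of whose loops has trivial evaluation in $\Bass(\bbA)$, then $\pi_1(\calD',v_0) = \pi_1(\calD,v_0)$. The inclusion $\supseteq$ is trivial. For $\subseteq$, given a loop $\ell$ at $v_0$ in $\calD'$, note that $\Gamma$ meets $\calD$ only in $w$, so each maximal subpath of $\ell$ lying in $\Gamma$ begins and ends at $w$ and is thus a loop in $\Gamma$; deleting all such subpaths yields a loop $\ell_0$ in $\calD$ with $\epsilon(\ell_0)=\epsilon(\ell)$, since $\epsilon$ is a semigroup homomorphism and we have only deleted subwords evaluating to $1$. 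Applying this lemma repeatedly, one wedge at a time, covers vertex saturation (loops in a Cayley graph $\Gamma_v$ return to the same group element, hence spell words evaluating to $1\in\bbA_v\leq\Bass(\bbA)$) and edge saturation (each attached loop $\ell_r$ spells a Bass--Serre relator $r\in R$, so $\epsilon(\ell_r)=1$). This gives $\pi_1(\calA_2,v_0)=\pi_1(\calA_1,v_0)=\pi_1(\calA_0,v_0)=H$.

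It then remains to show that folding preserves the subgroup; as $\calA_F$ is reached from $\calA_2$ by a finite folding process, it suffices to treat a single elementary folding $f:\calD\to\calD'$. The inclusion $\pi_1(\calD,v_0)\subseteq\pi_1(\calD',v_0)$ is immediate from $\mu(f\circ\rho)=\mu(\rho)$. For the reverse I would invoke the standard fact that an elementary folding is surjective on ordinary (topological) fundamental groups: given a loop $\ell'$ at $v_0$ in $\calD'$, there is a loop $\ell$ at $v_0$ in $\calD$ with $f\circ\ell$ homotopic to $\ell'$ rel $v_0$. Homotopic loops in a graph spell freely equal words, which therefore have equal images in the quotient $\Bass(\bbA)=F(X)/\langle\langle R\rangle\rangle$; hence $\epsilon(\ell')=\epsilon(f\circ\ell)=\epsilon(\ell)\in\pi_1(\calD,v_0)$, giving $\subseteq$.

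I expect the folding step, and specifically this $\supseteq$ inclusion, to be the main obstacle. The subtlety is that a folding is not a covering map, so a loop in $\calD'$ need not lift to a loop in $\calD$ on the nose, and the argument must pass through homotopy. Equivalently, one can run a case analysis: a folding that identifies two edges with distinct terminal vertices is a homotopy equivalence, hence an isomorphism on $\pi_1$ that respects $\epsilon$; while a bigon-collapsing folding kills only the loop $e(e')^{-1}$, whose label $\mu(e)\mu(e)^{-1}$ evaluates to $1$, so the image subgroup is again unchanged. Once the wedge lemma and folding invariance are in place, everything else is bookkeeping, and the equalities chain together to yield $\pi_1(\calA_F,v_0)=H$.
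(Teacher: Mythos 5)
Your proposal is correct and follows essentially the same route as the paper: the subgroup is tracked along $\calA_0 \to \calA_1 \to \calA_2 \to \calA_F$, the saturation steps are handled by decomposing a loop into maximal subpaths inside the wedged-on graph (which are loops evaluating to $1$), and the folding step rests on the standard fact that Stallings folds preserve the based loop-labels up to free equality. The only cosmetic differences are that you package both saturations into a single wedge lemma and re-derive the folding invariance via $\pi_1$-surjectivity of the fold, where the paper instead cites the corresponding lemma of Kapovich--Myasnikov.
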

\begin{proof}
    Let $\calA_0$ be the graph (the bouquet of generators) formed at step \ref{it:form-bouquet} of the algorithm. Then it is clear that $\pi_1(\calA_0,v_0)= \langle \epsilon(W_1),\ldots,\epsilon(W_n)\rangle$. Our goal is to show that this equality is preserved throughout the folding algorithm.

    \textit{Vertex saturation.} Step \ref{it:vert-saturation} involves attaching copies of Cayley graphs of vertex groups at various vertices of $\calA_0$. We will consider the effect of attaching a single one of these Cayley graphs. Let $\calA$ be an $\bbA$-graph, let $u \in \verts\calA$, let $[u]\in \verts A$ be its label, and consider the $\bbA$-graph $\calA'$ obtained as the quotient of the union\[
    \calA \cup (\Gamma_{[u]},u')/u\sim u',
    \] where $\Gamma_{[u]}$ is a copy of the given Cayley graph of $\bbA_{[u]}$ and $u'$ is some vertex of the Cayley graph. Let $\rho$ be a loop based at $v_0$ in $\calA'$. Then we can express $\rho$ as a concatenation\[
    \rho_1*\gamma_1*\cdots*\gamma_n*\rho_n,
    \] where the $\gamma_i$ are maximal subpaths that lie in $\Gamma_{[u]}.$ By maximality, the $\gamma_i$ must all be loops based at $u$, the vertex that is identified with the vertex $u'$. Now, the $\rho_i$ may not be loops, but their evaluations $\epsilon(\rho_i)$ are well-defined elements of $\Bass(\bbA)$ and the product $\epsilon(\rho_1)\epsilon(\gamma_1)\cdots\epsilon(\gamma_n)\epsilon(\rho_n) \in \pi_1(\bbA,v) \leq \Bass(\bbA)$. For any loop $\gamma_i$ in $\Gamma_{[u]}$, we must have that $\epsilon(\gamma_i) \in \bbA_{[u]}$ and that $\epsilon(\gamma_i)=1$. It follows that\[
        \epsilon(\rho_1*\gamma_1*\cdots*\gamma_n*\rho_n) = \epsilon(\rho_1)\cdots\epsilon(\rho_n),
    \] and since all the $\gamma_i$ are loops, the concatenation $\rho'=\rho_1*\cdots*\rho_n$ is well defined.

    Since there is an inclusion $\calA \subset \calA'$, we have $\pi_1(\calA,v_0) \leq \pi_1(\calA',v_0)$. We have just shown that for any $g \in \pi_1(\calA',v_0)$ represented by a loop $\rho'$ in $\calA'$ based at $v_0$, there is a loop $\rho$ in $\calA$ based at $v_0$ such that $\epsilon(\rho')=\epsilon(\rho)=g$. Thus $\pi_1(\calA',v_0) \leq \pi_1(\calA,v_0)$. Since vertex saturation involves performing this operation repeatedly, it follows that $\pi_1(\calA_1,v_0) = \pi_1(\calA_0,v_0)=H$.

    \textit{Edge saturation.} Step \ref{it:edge-saturation} involves attaching loops $\ell_r$, whose labels $\mu(r)$ are precisely relations in $R$, so that $\epsilon(\ell_r)=1$. An argument that is almost identical to the vertex saturation case establishes $\pi_1(\calA_2,v_0)=\pi_1(\calA_1,v_0)=H$.

    \textit{Folding.} Step \ref{it:fold} of the algorithm involves classical Stallings foldings. Our notation was chosen to match that of \cite{kapovich_stallings_2002}. By applying \cite[Lemma 3.4]{kapovich_stallings_2002} repeatedly we have $\overline{L(\calA_2,v_0)} = \overline{L(\calA_F,v_0)}$, thus\[
    \pi_1(\calA_F,v_0) = \epsilon(\overline{L(\calA_F,v_0)}) = \epsilon(\overline{L(\calA_2,v_0)})=\pi_1(\calA_2,v_0)=H,
    \] as required.
\end{proof}

The main difficulty when dealing with graphs of groups is that there are multiple equally valid ways to represent an element as a word. One way to overcome this is to introduce normal forms. In this paper, we chose instead to show that $L(\calA_F,v_0)$ contains all possible reduced forms of elements.

\begin{conv}
    When $\epsilon_i$s occur as exponents, they are either $1$ or $-1$.
\end{conv}

A first crucial observation is that in a fully folded directed $X$-labelled graph, a path $\rho$ has no backtracks if and only if its label $\mu(\rho)$ is a freely reduced word in $(X^{\pm 1})^*$. Consider the folded graph $\calA_F$, and for a given $v \in \verts A$, consider the subgraph consisting of all edges with labels in $\gen v$. We call the connected components of this subgraph \emph{the $v$-components of $\calA_F$}. It is clear that each vertex of $\calA_F$ lies in some $v$-component, and that deleting all edges with a label in $\edges A$ will disconnect $\calA_F$ into various $v$-components. 

It follows that given a path $\rho$ in $\calA_F$, we have an \emph{$\bbA$-decomposition}\[
    \rho = \alpha_1*e_1^{\epsilon_1}*\cdots e_n^{\epsilon_n}*\alpha_{n}
\] which is the unique decomposition with the $\alpha_i$ being maximal subpaths contained in $v$-components for an appropriate $v \in \verts A$, and where the $e_j$s have label $\mu(e_j) \in \edges A$. The paths $\alpha_i$s are called $\verts A$-components. We now give our two substitution lemmas.

\begin{lem}\label{lem:v-substition}
    Let $\rho = \alpha_1*e_1^{\epsilon_1}*\cdots*\alpha_i*\cdots*e_n^{\epsilon_n}\alpha_{n+1}$ be the $\bbA$-decomposition of a path in $\calA_F$. Let $\alpha_i$ lie in a $v$-component, and let $W_i' \in (\gen v^{\pm 1})^*$ be a word such that $\epsilon(\alpha_i) =_{\bbA_v} \epsilon(W_i')$. Then there is a path $\alpha_i'$ with the same endpoints as $\alpha_i$ such that $\mu(\alpha_i')=W_i'$ and \[
    \rho' = \alpha_1*e_1^{\epsilon_1}*\cdots*\alpha_i'*\cdots*e_n^{\epsilon_n}\alpha_{n}
    \] is also a path in $\calA_F$ with the same endpoints as $\rho$. Furthermore $\epsilon(\rho)=\epsilon(\rho')$.
\end{lem}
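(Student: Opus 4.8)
The plan is to prove Lemma~\ref{lem:v-substition} by directly constructing the path $\alpha_i'$ inside the $v$-component containing $\alpha_i$, using the fact that this $v$-component is a folded $\gen v$-labelled graph admitting a label-preserving morphism onto the Cayley graph $\Gamma_v = \mathrm{Cay}_{\gen v}(\bbA_v)$. The key structural claim I would establish first is that each $v$-component of $\calA_F$ maps by a morphism onto $\Gamma_v$: indeed, vertex saturation attached a full copy of $\Gamma_v$ at every relevant vertex, and the Stallings foldings of step~\ref{it:fold} are quotient maps that respect labels, so the universal property of $\Gamma_v$ (it is the Cayley graph, hence the complete deterministic automaton for $\bbA_v$ on the alphabet $\gen v$) gives a canonical morphism $p \colon (\text{$v$-component}) \to \Gamma_v$ sending the appropriate vertices to group elements.

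Granting this, the core of the argument is a lifting/readability statement. Let $x,y$ be the endpoints of $\alpha_i$, sitting over group elements $p(x)=g$ and $p(y)=h$ in $\bbA_v$; since $\alpha_i$ is a path in the $v$-component we have $\epsilon(\alpha_i) =_{\bbA_v} g^{-1}h$. The hypothesis gives a word $W_i' \in (\gen v^{\pm 1})^*$ with $\epsilon(W_i') =_{\bbA_v} \epsilon(\alpha_i) = g^{-1}h$. The plan is then to read $W_i'$ as a path in the $v$-component starting at $x$: because the component is \emph{completely folded}, at each vertex there is at most one outgoing and one incoming edge with any given label in $\gen v$, so reading a letter is deterministic when an edge exists. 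I would argue that such an edge always exists at each step by appealing to the morphism $p$: the image of the partial read in $\Gamma_v$ tracks the product of letters read so far times $g$, which is a genuine group element, and since vertex saturation glued in a complete copy of $\Gamma_v$ at the initial vertex and foldings only identify (never delete) edges, the needed edge is present. Hence $W_i'$ lifts to a unique path $\alpha_i'$ in $\calA_F$ starting at $x$; it terminates at the vertex over $g \cdot \epsilon(W_i') = g \cdot g^{-1}h = h$, and by the same completeness-plus-foldedness argument this terminal vertex must be exactly $y$, so $\alpha_i'$ has the same endpoints as $\alpha_i$ with $\mu(\alpha_i')=W_i'$ as desired.

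The final bookkeeping step is to verify that replacing $\alpha_i$ by $\alpha_i'$ yields a legitimate path $\rho'$ with $\epsilon(\rho)=\epsilon(\rho')$. Since $\alpha_i$ and $\alpha_i'$ share both endpoints, the concatenations with the unchanged pieces $e_j^{\epsilon_j}$ and $\alpha_j$ ($j\neq i$) are still well-defined, so $\rho'$ is a path with the same endpoints as $\rho$. For the evaluation, I would use that $\epsilon$ is a semigroup homomorphism together with $\epsilon(\alpha_i) =_{\bbA_v} \epsilon(\alpha_i')$: the two evaluations of $\rho$ and $\rho'$ differ only in the $i$-th syllable, and that syllable's evaluation is unchanged as an element of $\bbA_v \leq \Bass(\bbA)$, whence $\epsilon(\rho)=\epsilon(\rho')$ in $\Bass(\bbA)$.

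\medskip
\textbf{Main obstacle.} The delicate point I expect to dominate the argument is justifying that the morphism $p$ onto $\Gamma_v$ is well-defined and that reading $W_i'$ never gets ``stuck'' — i.e.\ that completeness of the attached Cayley graphs survives the folding process so that every vertex of a $v$-component really does have a full set of $\gen v$-labelled edges. Once one is convinced that foldings preserve this completeness (they only merge edges with matching labels and orientations, so the local outgoing/incoming label set can never shrink below what a quotient of $\Gamma_v$ provides), the deterministic lifting of $W_i'$ and the matching of endpoints are essentially forced, and the evaluation equality is routine.
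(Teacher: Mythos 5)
Your overall strategy (determinism of the folded graph plus completeness inherited from the attached Cayley graphs) points in the right direction, but the key structural claim you lean on is false as stated: in general there is \emph{no} label-preserving morphism $p$ from a $v$-component of $\calA_F$ \emph{onto} $\Gamma_v$. The morphism goes the other way: each attached copy of $\Gamma_v$ maps onto its image in $\calA_F$, and that image can be a \emph{proper} quotient of $\Gamma_v$ --- indeed this is exactly the situation exploited in Proposition \ref{prop:detect-free}, where a $v$-component contains a loop whose label does not evaluate to $1$ in $\bbA_v$. Such a loop cannot map to a loop of $\Gamma_v$, so no morphism to $\Gamma_v$ exists, and your phrase ``the vertex over $g\cdot\epsilon(W_i')$'' has no meaning: the vertices of the component are not fibered over elements of $\bbA_v$. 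The repair is to show that a $v$-component is a Schreier coset graph $K\backslash\Gamma_v$ for some $K\leq \bbA_v$ (a surjective label-preserving morphism from the vertex-transitive, complete, folded graph $\Gamma_v$ onto a folded graph forces the fibers to be the right cosets of a subgroup), after which reading words is the right coset action of $\bbA_v$, so two words with the same evaluation read from $x$ do end at the same vertex. Without some such argument, folded-plus-complete alone only gives you an action of the free group $F(\gen v)$ on the vertices, and nothing yet guarantees that the relators of $\bbA_v$ act trivially, which is precisely what you need to conclude that $W_i'$ and $\mu(\alpha_i)$ land at the same place.

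For comparison, the paper sidesteps this entirely by lifting rather than projecting: it picks a preimage $\hat v_i$ of the initial vertex of $\alpha_i$ lying in an honest copy $\Gamma'_v$ of the Cayley graph inside $\calA_2$ (every vertex of $\calA_2$ is eventually identified with such a vertex during folding), reads both $\mu(\alpha_i)$ and $W_i'$ there --- where the defining property of the Cayley graph immediately gives two paths with the same endpoints --- and then pushes both paths forward under the quotient morphism $\calA_2\to\calA_F$, using uniqueness of labelled paths in a folded graph to identify the image of $\hat\alpha_i$ with $\alpha_i$. That route needs no analysis of what the $v$-components look like. Your ``completeness survives folding'' observation and the final bookkeeping paragraph are fine, but as written the endpoint-matching step rests on an object that does not exist.
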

\begin{proof}
    Let $v_i \in \verts{\calA_F}$ be the initial vertex of $\alpha_i$, and set $v=[v_i]$. The preimages of $v_i$ in $\calA_2$ are either as vertices in the bouquet of generators in step \ref{it:form-bouquet}, vertices in one of the copies of Cayley graphs of $\bbA_v$ added in step \ref{it:vert-saturation}, or vertices in one of the loops added in step \ref{it:edge-saturation}. In all cases, when performing the folding process, any preimage of $v_i$ is eventually identified with the vertex of some Cayley graph added in step \ref{it:vert-saturation}. So we take a preimage $\hat v_i$ of $v_i$ that lies in some copy $\Gamma'_v$ of a Cayley graph of $\bbA_v$.

    By properties of the Cayley graph, since $\mu(\alpha_i)=_{\bbA_v} \mu(\alpha_i')$, there exist paths $\hat\alpha_i$ and $\hat\alpha_i'$  with labels $\mu(\alpha_i)$ and $\mu(\alpha_i')$  respectively both originating at $\hat v_i$ and terminating at the same vertex $\hat u_i$. Because $\calA_2 \to \calA_F$ is realized by a continuous map, $\hat\alpha_i$ is mapped to a path in $\calA_F$ starting at $v_i$, and because in a folded graph there is at most one path originating from a vertex with a given label, we have that $\hat\alpha_i$ is mapped to $\alpha_i$. It follows that $\alpha_i'$, the image of $\hat\alpha_i'$, has the same endpoints as $\alpha_i$, so $\rho'$ is also a path in $\calA_F$ with the same endpoints.

    Finally, the equality $\epsilon(\rho)=\epsilon(\rho')$ follows from the $\Bass(\bbA)$-equality\[
\epsilon(\rho')= \epsilon(\alpha_1)\epsilon(e_1)^{\epsilon_1}\cdots\epsilon(\alpha_i')\cdots\epsilon(e_n)^{\epsilon_n}\epsilon(\alpha_{n_1}),
    \] and the fact that $\epsilon(\alpha_i')=\epsilon(\alpha_i).$
\end{proof}
\begin{lem}\label{lem:e-substitution}
    Let $\rho = \alpha_1*{e_1}^{\epsilon_1}*\cdots\alpha_i*{e_i}^{\epsilon_i}*\alpha_{i+1}*\cdots*{e_n}^{\epsilon_n}\alpha_{n_1}$ be the $\bbA$-decomposition of a path in $\calA_F$, and let $\mu(e_i)W(t_{\mu(e_i)}(h))\mu(e_{i})^{-1}V(i_{\mu(e_i}(h))^{-1}$ be one of the relations in \eqref{eqn:BS-eqns}. Then the 1-edged path $e_i^{\epsilon_i}$ can be replaced by a path 
    \[
    \begin{cases}
        \nu*(e_i')^{\epsilon_i}*\omega & \textrm{ if $\epsilon_i=1$}\\ 
        \omega*(e_i')^{\epsilon_i}*\nu &\textrm{ if $\epsilon_i=-1$}\\ 
    \end{cases}
    \]
    where $\mu(e_i)=\mu(e_i')$ and $\mu(\omega) =W(t_{\mu(e_i)}(h))^{-\epsilon_i}$ and $\mu(\nu) = V(i_{\mu(e_i)}(h))^{\epsilon_i}$, giving a new path
    \begin{eqnarray*}
        \rho' &=& \alpha_1*{e_1}^{\epsilon_1}*\cdots*\alpha_i*\nu*e_i'*\omega*\alpha_{i+1}*\cdots*{e_n}^{\epsilon_n}\alpha_{n_1}\\
        &\textrm{or}&\\
        \rho' &=&\alpha_1*{e_1}^{\epsilon_1}*\cdots*\alpha_i*\omega*(e_i')^{-1}*\nu*\alpha_{i+1}*\cdots*{e_n}^{\epsilon_n}\alpha_{n_1},
    \end{eqnarray*}
    if $\epsilon_i =1$ or $\epsilon_1=-1$ respectively, that is also in $\calA_F$ with the same endpoints and $\epsilon(\rho)=\epsilon(\rho')$.
\end{lem}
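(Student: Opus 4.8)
The plan is to mirror the proof of Lemma \ref{lem:v-substition}, using the relation loops attached during edge saturation (step \ref{it:edge-saturation}) in place of the Cayley graphs used there. Set $e=\mu(e_i)$, $w=\epsilon(W(t_e(h)))$ and $v=\epsilon(V(i_e(h)))$. Since $r=e\,W(t_e(h))\,e^{-1}\,V(i_e(h))^{-1}$ is a relation of $\Bass(\bbA)$ we have $\epsilon(r)=1$, i.e. $e w e^{-1}=v$, which rearranges to the group identity $e=v\,e\,w^{-1}$. Any edge $e_i'$ with $\mu(e_i')=e$ also evaluates to $e$, so the candidate replacement $\nu*(e_i')^{\epsilon_i}*\omega$ (resp. $\omega*(e_i')^{\epsilon_i}*\nu$) has evaluation $v\,e\,w^{-1}=e=\epsilon(e_i)$ (resp. $w^{-1}e^{-1}v=e^{-1}$). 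Hence once the replacement is shown to be a genuine path with the correct endpoints, $\epsilon(\rho)=\epsilon(\rho')$ is immediate from the $\bbA$-decomposition, since every other syllable of $\rho$ is left untouched. I would treat $\epsilon_i=1$ in detail, the case $\epsilon_i=-1$ following by reversing the path.

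Next I would produce the replacement by lifting to $\calA_2$ and pushing forward along the folding morphism $\calA_2\to\calA_F$. Writing $p=i(e_i)$ and $q=t(e_i)$, the goal is to exhibit in $\calA_F$ a path from $p$ to $q$ reading $V(i_e(h))\cdot e\cdot W(t_e(h))^{-1}$. For an original edge of $\calA_0$ this is transparent: the relation loop $\ell_r$ attached at its initial vertex reads $e\,W(t_e(h))\,e^{-1}\,V(i_e(h))^{-1}$, so traversing it starting from the $V$-side produces a path with the desired label and the same endpoints; its image under the folding morphism is the required path, because $\calA_F$ is completely folded and hence reads any given label along at most one path from a given vertex, so the push-forward is unambiguous and has image endpoints $p,q$.

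The main obstacle is that, unlike vertices (each of which is eventually identified with a Cayley-graph vertex), not every $e$-labelled edge of $\calA_F$ has an original preimage: the ``return'' edges of the relation loops create $e$-edges whose initial vertices lie in the coset $p\cdot i_e(\bbA_e)$ rather than at $p$ itself, and these can survive as distinct edges after folding. The resolution is the apparently wasteful choice in \eqref{eqn:BS-eqns} of including a Bass-Serre relation for \emph{every} $h\in\bbA_e$. I would argue that this makes the set of initial vertices of $e$-edges in the $i(e)$-component equal to the full coset $p\cdot i_e(\bbA_e)$, which is closed under right multiplication by $i_e(\bbA_e)$: reading $V(i_e(h))$ from such a vertex $p\cdot i_e(h_0)$ lands at $p\cdot i_e(h_0 h)$, again the initial vertex of a relation-loop edge (the one indexed by $h_0h\in\bbA_e$), so the middle edge $e_i'$ of the replacement always exists, and reading $W(t_e(h))^{-1}$ from its terminus returns to $q$ by the same relation. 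This is exactly where finiteness of $\bbA_e$ and the completeness of the relator set are used.

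Finally I would assemble the pieces: the constructed path $\nu*e_i'*\omega$ lies in $\calA_F$, shares its endpoints $p,q$ with $e_i$, and has evaluation $\epsilon(e_i)$ by the first step. Substituting it for the single syllable $e_i^{\epsilon_i}$ in the $\bbA$-decomposition of $\rho$ then yields a path $\rho'$ in $\calA_F$ with the same endpoints as $\rho$ and $\epsilon(\rho)=\epsilon(\rho')$, as required, with the folded (deterministic) property of $\calA_F$ invoked throughout to make each push-forward of a labelled path unique.
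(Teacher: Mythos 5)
Your proposal is correct and matches the paper's argument in all essentials: the paper likewise lifts $e_i$ to $\calA_2$, treats an original edge of $\calA_0$ via the rectangle attached at its initial vertex, and resolves your ``main obstacle'' (return edges of relation loops having no original preimage) in exactly the way you propose, by exploiting that a rectangle is attached for \emph{every} $h\in\bbA_e$. The only cosmetic difference is packaging: the paper folds the rectangle for $h_i$ against the one for $h_i h$ to get a path labelled $V(i_{\mu(e_i)}(h_i))^{-1}V(i_{\mu(e_i)}(h_ih))\cdot\mu(e_i)\cdot W(t_{\mu(e_i)}(h_ih))^{-1}W(t_{\mu(e_i)}(h_i))$ and then invokes Lemma~\ref{lem:v-substition} to rewrite the side labels as $V(i_{\mu(e_i)}(h))$ and $W(t_{\mu(e_i)}(h))^{-1}$, which is precisely your coset-closure observation stated at the level of $\calA_F$.
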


\begin{proof}
  
  Take a preimage $\hat e_i$ of $e_i$ in $\calA_2$. $\hat e_i$ was either added in step \ref{it:form-bouquet} or in step \ref{it:edge-saturation}. We will suppose that $\hat e_i$ was added in step \ref{it:edge-saturation}, and we will assume $\epsilon_i=1$. The other cases follow from similar arguments.

  In this case, $\hat e_i$ is added as part of a loop $\ell_r$ with $$\mu(\ell_r)=\mu(e_i)W(t_{\mu(e_i)}(h_i))\mu(e_i)^{-1}V(i_{\mu(e_i)}(h_i))^{-1},$$ for some $h_i \in \bbA_{\mu(e_i)}$, as shown in Figure \ref{fig:e-sub}.

  \begin{figure}[h]
    \centering
    \includegraphics[width=\textwidth]{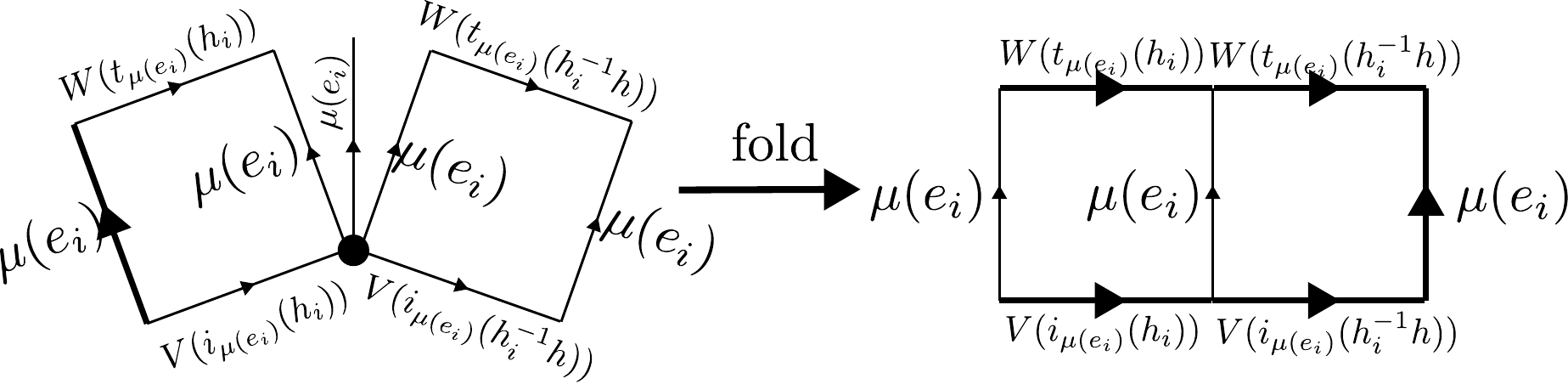}
    \caption{The two squares represent loops added in step \ref{it:edge-saturation}. The bold edge on the left is the preimage $\hat e_i$ of $e_i$ with label $\mu(e_i)$, and the bold path on the right is the new path $\hat\nu*\hat{e}_i'*\hat\omega$. The decorations are labels of edges or edge paths.}
    \label{fig:e-sub}
  \end{figure}

  An important feature of the folding algorithm is that the order in which we perform folds does not affect the final result. By step \ref{it:edge-saturation} of the algorithm, there will be another loop with label $\mu(e_i)W(t_{\mu(e_i)}(h_i^{-1}h))\mu(e_i)^{-1}V(i_{\mu(e_i)}(h_i^{-1}h))^{-1}$ attached to a corner of $\ell_r$ as shown in Figure \ref{fig:e-sub}, and we can immediately fold the three edges labelled $\mu(e_i)$ as shown in Figure \ref{fig:e-sub} in order to obtain an intermediate graph $\calA'$ such that there is a sequence of continuous maps $\calA_2 \onto \calA' \onto \calA_F$.

  In $\calA'$, looking again at Figure \ref{fig:e-sub}, we see that there is a path $\hat \nu *\hat e_i' * \hat \omega$ with the same endpoints as $\hat e_i$ where
\begin{eqnarray*}
  \mu(\hat\nu) &=& V(i_{\mu(e_i)}(h_i))V(i_{\mu(e_i)}(h_i^{-1}h))\\
  \mu(\hat e_i')&=&\mu(e_i)\\
  \mu(\hat\omega)&=&W(t_{\mu(e_i)}(h_i^{-1}h))^{-1}W(t_{\mu(e_i)}(h_i))^{-1}\\
\end{eqnarray*}

This path descends to a concatenation of path $\nu'*e_i'*\omega'$ with the same endpoints as $e_i$. Noting that in the respective vertex groups, we have  $$\mu(\nu')=V(i_{\mu(e_i)}(h_i))V(i_{\mu(e_i)}(h_i^{-1}h))\stackrel{\epsilon}{=}_{\bbA_{i(\mu(e_i))}}V(i_{\mu(e_i)}(h))$$ and $$\mu(\omega')=W(t_{\mu(e_i)}(h_i^{-1}h))^{-1}W(t_{\mu(e_i)}(h_i))^{-1}\stackrel{\epsilon}{=}_{\bbA_{t(\mu(e_i))}}W(t_{\mu(e_i)}(h))^{-1},$$ where here second equalities are  meant as ``have the same evaluation''. We can now apply Lemma \ref{lem:v-substition} to the paths $\nu'$ and $\omega'$ to obtain paths $\nu$ and $\omega$ with the same respective endpoints and desired respective labels.

\end{proof}

\begin{lem}\label{lem:reduction-sub}
  Let $\rho$ be a path in $\calA_F$ and suppose the label $\mu(\rho) \in (X^{\pm 1})^*$ is not freely reduced. Then there is another path $\rho'$ with the same endpoints as $\rho$ with $\epsilon(\rho') = \epsilon(\rho)$, but with $\mu(\rho')$ freely reduced.
\end{lem}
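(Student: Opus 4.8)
The goal is to prove Lemma \ref{lem:reduction-sub}: if a path $\rho$ in the folded graph $\calA_F$ has a label $\mu(\rho)$ that is not freely reduced, then we can find a path $\rho'$ with the same endpoints and the same evaluation, but whose label \emph{is} freely reduced.

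\textbf{The plan.} The first thing I would note is the crucial observation made just before the $\bbA$-decomposition was introduced: in a \emph{fully folded} graph, a path is (graph-)reduced if and only if its label is a freely reduced word. Equivalently, $\mu(\rho)$ contains a subword of the form $xx^{-1}$ or $x^{-1}x$ precisely when $\rho$ backtracks across a single edge, i.e.\ $\rho$ contains a subpath $f*\bar f$ traversing some edge $f$ and then immediately traversing it back. The whole point of being folded is that this is the \emph{only} way free cancellation can happen: two consecutive edges with mutually inverse labels at a folded vertex must in fact be the same geometric edge. So the plan is to locate such a backtrack, excise it, and induct on the length of the path.

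\textbf{The key steps.} First I would make the observation above precise: suppose $\mu(\rho)$ is not freely reduced, so there is a position where $\rho = \sigma * g^{\delta} * h^{\delta'} * \tau$ with $\mu(g)^{\delta}\mu(h)^{\delta'}$ of the form $xx^{-1}$ or $x^{-1}x$, meaning $\mu(g)=\mu(h)$ and $\delta'=-\delta$. The two edges $g,h$ share the intermediate vertex $w$ (the terminal vertex of $g^\delta$, which equals the initial vertex of $h^{\delta'}$), and at $w$ they have the same label and the same incidence. Since $\calA_F$ is completely folded, there is at most one edge at $w$ with that label and incidence, forcing $g=h$ as edges of $\calA_F$, with $\delta'=-\delta$. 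Thus $g^\delta * h^{\delta'}$ is literally a path $g^\delta*(g^\delta)^{-1}$ that backtracks along a single edge and returns to its starting vertex. I would then set $\rho' = \sigma*\tau$. This is a well-defined path because the terminal vertex of $\sigma$ (the initial vertex of $g^\delta$) equals the initial vertex of $\tau$ (the terminal vertex of $h^{\delta'}=(g^\delta)^{-1}$); it has the same endpoints as $\rho$; and its evaluation is unchanged since $\epsilon(g)^{\delta}\epsilon(g)^{-\delta}=1$ in $\Bass(\bbA)$, so deleting this factor does not alter the long product. Finally, since $\mu(\rho')$ is obtained from $\mu(\rho)$ by deleting an $xx^{-1}$ (or $x^{-1}x$) subword, it is strictly shorter, and I would conclude by induction on $|\mu(\rho)|$: either $\mu(\rho')$ is already freely reduced, or we repeat the procedure until it is.

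\textbf{The main obstacle.} The mathematical content is entirely in the folded-ness argument, so there is no real obstacle once that observation is nailed down; the danger is rather one of rigor and bookkeeping. The one subtlety worth stating carefully is \emph{why} the two backtracking edges must be identical rather than merely having the same label: this uses that folding identifies any two edges sharing a common endpoint with equal labels and equal orientation, which is exactly the definition of ``completely folded'' given in the folding-and-morphisms subsection. I would make sure the inductive framing is clean---strictly decreasing word length---so that deleting one cancelling pair at a time visibly terminates, rather than appealing to a vaguer ``repeat until done.'' No appeal to the $\bbA$-decomposition or to the substitution lemmas is needed here; this lemma is purely about free (single-edge) cancellation in a folded graph, and keeping the proof at that elementary level is the cleanest approach.
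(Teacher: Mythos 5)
Your proposal is correct and follows essentially the same route as the paper's proof: locate a cancelling pair $xx^{-1}$ in the label, use complete foldedness to conclude the two consecutive edges are the same geometric edge traversed back and forth, excise the backtrack, and iterate on the strictly shorter path. The only difference is that you spell out the foldedness argument and the termination by induction a bit more explicitly than the paper does.
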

\begin{proof}
  If $\mu(\rho)$ is already freely reduced, there is nothing to show. Suppose $\mu(\rho)$ is not freely reduced. Then $\mu = Lx^{\pm 1}x^{\mp 1}R$ for words $L,R \in (X^{\pm 1})^*$ and some $x \in X$, and it follows that  $\rho = \lambda*e_i^{\epsilon_i}*e_{i+1}^{\epsilon_{i+1}}*\sigma$ where $\mu(e_i)=\mu(e_{i+1}) = x$.
  
  Because $\calA_F$ is folded, $e_i=e_{i+1}$ and $\epsilon_i=-\epsilon_{i+1}$, so $\rho' = \lambda*\sigma$ is a well-defined concatenation in $\calA_F$ with $\mu(\rho')$ freely equivalent to $\mu(\rho)$. Since $\rho'$ is shorter than $\rho$, we can repeat the process until we get the desired result.
\end{proof}

\begin{prop}\label{prop:read-reduced}
     $g \in H \leq \pi_1(\bbA,v)$ if and only if for any reduced word $W\in (X^{\pm 1})^*$ with $\epsilon(W)=g$, there is a loop $\ell_W$ based at $v_0$ in $\calA_F$ such that $\mu(\ell_w)=W.$
\end{prop}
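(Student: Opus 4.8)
The plan is to prove both directions of the biconditional, exploiting the two substitution lemmas (Lemma \ref{lem:v-substition} and Lemma \ref{lem:e-substitution}) together with the free-reduction lemma (Lemma \ref{lem:reduction-sub}) and the subgroup-preservation result (Proposition \ref{prop:preservation}).

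For the ``if'' direction, suppose such a loop $\ell_W$ exists. Then $\mu(\ell_W)=W$ so $\epsilon(W)=\epsilon(\ell_W)\in \pi_1(\calA_F,v_0)$, which equals $H$ by Proposition \ref{prop:preservation}. Hence $g=\epsilon(W)\in H$. This direction is essentially immediate from the definitions and the preservation proposition.

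The ``only if'' direction is the substantive part. Suppose $g\in H=\pi_1(\calA_F,v_0)$. By definition there is \emph{some} loop $\rho$ in $\calA_F$ based at $v_0$ with $\epsilon(\rho)=g$. The goal is to show that for \emph{any} reduced word $W$ representing $g$, we can find a loop labelled exactly by $W$. The idea is to take the $\bbA$-decomposition $\rho=\alpha_1*e_1^{\epsilon_1}*\cdots*e_n^{\epsilon_n}*\alpha_{n}$ and manipulate it so that its label becomes $W$. First I would observe that $\epsilon(\rho)=\epsilon(W)=g$ and that $W$ has a syllable decomposition; by Lemma \ref{lem:reduced-minmal} the reduced word $W$ has minimal syllable length among words representing $g$. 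The strategy is then to transform $\rho$, via repeated applications of the two substitution lemmas and the free-reduction lemma, into a loop whose label is reduced, and then argue that a reduced loop in a folded graph with the prescribed evaluation must read precisely $W$. Concretely, whenever $\mu(\rho)$ contains a cancellable subword $e^{\pm 1}Ue^{\mp 1}$ (with $\epsilon(U)$ in the appropriate edge-group image), I would use Lemma \ref{lem:e-substitution} to insert the corresponding Bass-Serre words and then Lemma \ref{lem:v-substition} to collapse the $\verts A$-components, thereby reducing the syllable length by two; and whenever $\mu(\rho)$ fails to be freely reduced I would use Lemma \ref{lem:reduction-sub}. Each such move preserves both endpoints (hence keeps us at a based loop) and the evaluation $g$, while strictly decreasing either syllable length or word length, so the process terminates in a loop $\rho^*$ whose label $\mu(\rho^*)$ is reduced.

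The main obstacle is the \emph{uniqueness}/matching step: having produced \emph{a} reduced loop $\rho^*$ with $\epsilon(\rho^*)=g$, I must argue that its label equals the \emph{given} reduced word $W$ (or at least that there is a loop labelled by $W$ specifically). The key structural input is that $\calA_F$ is completely folded, so a reduced path is determined by its starting vertex and its label, and conversely a path is reduced exactly when its label is freely reduced. I expect to argue that any two reduced words with the same evaluation $g\in\pi_1(\bbA,[v])$ have the same syllable structure and differ only within vertex groups in a controlled way — essentially the normal-form uniqueness from Bass-Serre theory applied at the level of words. Using Lemma \ref{lem:v-substition} one last time, I can replace each $\verts A$-component of $\rho^*$ by the corresponding syllable of $W$ (legitimate because the two syllables are equal in the relevant vertex group and the component lies in a saturated Cayley graph), yielding a based loop labelled exactly $W$. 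Thus the delicate point is not the reduction itself but verifying that the syllable-by-syllable replacement dictated by $W$ can always be realized inside $\calA_F$, which is precisely what the Cayley-graph saturation guarantees via Lemma \ref{lem:v-substition}.
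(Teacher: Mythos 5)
Your ``if'' direction is fine and matches the paper: a loop labelled $W$ gives $\epsilon(W)\in\pi_1(\calA_F,v_0)=H$ by Proposition \ref{prop:preservation}. The ``only if'' direction, however, has a genuine gap at exactly the point you flag as ``the main obstacle.'' Having produced some reduced loop $\rho^*$ with $\epsilon(\rho^*)=g$, you propose to pass to the given word $W$ by replacing each $\verts A$-component of $\rho^*$ with the corresponding syllable of $W$ via Lemma \ref{lem:v-substition}, on the grounds that ``the two syllables are equal in the relevant vertex group.'' That claim is false in general: two reduced words representing the same element of $\pi_1(\bbA,v)$ have the same sequence of edge letters, but their corresponding vertex-group syllables agree only \emph{modulo elements of the incident edge groups} (in an amalgam $A*_CB$, the reduced forms $a_1b_1$ and $(a_1c)(c^{-1}b_1)$ with $c\in C$ represent the same element, yet $a_1\neq_{A} a_1c$). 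So Lemma \ref{lem:v-substition} alone cannot carry out the replacement; you would need to interleave applications of Lemma \ref{lem:e-substitution} to shuttle edge-group elements across the edges, and you would need a precise word-level uniqueness statement controlling exactly which edge-group corrections occur. Proving that statement is essentially the whole content of the proposition, so invoking ``normal-form uniqueness from Bass-Serre theory applied at the level of words'' without proof leaves the argument circular at its crux. (A secondary, fixable issue: your termination measure needs care, since Lemma \ref{lem:e-substitution} lengthens the word before the free cancellation shortens the syllable count, so the two moves must be bundled.)

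The paper takes a different and self-contained route that packages precisely this missing bookkeeping: it forms a Van Kampen diagram $D(W,W')$ over $\bk{X|R}$ for $W'W^{-1}$, where $W'$ is the label of a known loop, and deletes free faces and spurs from the top boundary while using Lemmas \ref{lem:v-substition}, \ref{lem:e-substitution}, and \ref{lem:reduction-sub} to keep each successive top label readable in $\calA_F$. The $e$-strips in the diagram are exactly the record of how edge-group elements are exchanged between the two words, and the innermost-strip argument (using that $W$ is reduced, via Lemma \ref{lem:reduced-minmal}) rules out the problematic configurations, reducing the endgame to the vertex-group-only Case 2 where Lemma \ref{lem:v-substition} genuinely suffices. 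If you want to salvage your direct approach, you would have to state and prove the word-level uniqueness of reduced forms up to edge-group interleaving; the diagrammatic argument is the paper's way of avoiding that.
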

\begin{proof}
  By Proposition \ref{prop:preservation} one implication follows immediately. It therefore remains to be shown that any reduced word representing some $g \in H$ is readable along a loop in $\calA_F$ based at $v_0$. We will argue using Van-Kampen diagrams.
  
  A Van Kampen diagram for a group presentation $\langle X | R \rangle$ is a simply connected CW 2-complex $D$ equipped with a fixed embedding in the plane. The 1-skeleton $D^{(1)}$ of $D$ has the structure of a directed $X$-labelled graph. We can read off words in $R^{\pm 1}$ along the boundaries of the 2-cells in $D$. By Van Kampen's Lemma, see \cite[\S V.1]{lyndon_combinatorial_2001}, for any word $W$ representing the identity in $\langle X | R\rangle$, there is a Van Kampen diagram $D$ with a vertex $d_0$ on its boundary such that the word $W$ can be read along the path that traces the boundary of $D$ when read in the clockwise direction.

  Let $g \in H$ be arbitrary, and let $W$ be an arbitrary reduced word, in the sense of Definition \ref{defn:reduced}, with $\epsilon(W)=g$. 
  By Proposition \ref{prop:preservation}, there exists a loop $\ell$ in $\calA_F$ based at $v_0$ such that $\epsilon(\ell) = g$. Let $\mu(\ell)=W'$. It follows that $W'W^{-1}$ represents the identity in $\langle X | R\rangle$ which is our presentation for $\Bass(\bbA)$. Consider the  Van Kampen diagram $D=D(W,W')$ for $W'W^{-1}$. It has two vertices $d_l,d_r$ (a ``leftmost'' and a ``rightmost'') such that for the path $\rho'$ that travels in $\partial D$ from $d_l$ to $d_r$  in the clockwise direction along the top of $D$, we have $\mu(\rho')=W'$ and for the path $\rho$ that travels in $\partial D$ from $d_l$ to $d_r$  in the counter-clockwise direction along the bottom of $D$ we have $\mu(\rho)=W$. This is depicted in Figure \ref{fig:van-kampen}.

    \begin{figure}[htb]
        \centering
        \includegraphics[width=\textwidth]{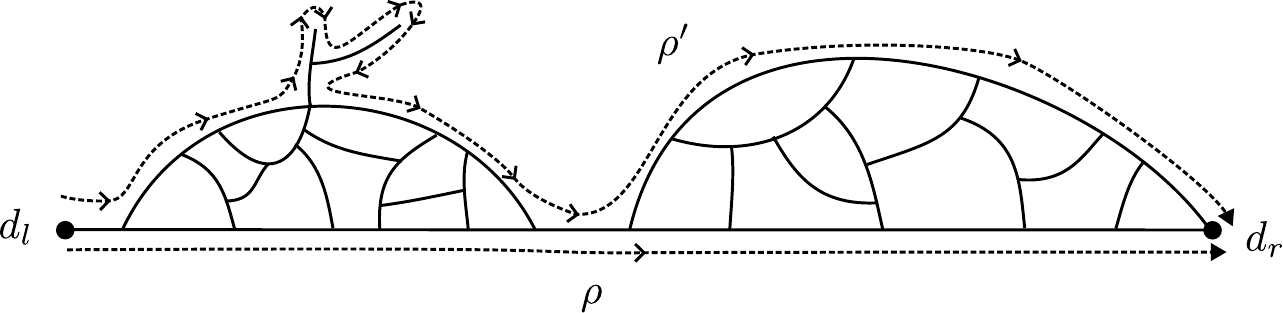}
        \caption{The Van Kampen diagram $D(W,W')$ for a pair of words representing the same group element. Note that this diagram has three degree 1 vertices, one of which is $d_l$.}
        \label{fig:van-kampen}
    \end{figure}

    We know that $W'$ is the label of some loop in $\calA_F$ based at $v_0$. We want to show $W$ can also be read off some loop in $\calA_F$ based at $v_0$. We will do this by deleting cells from $D$ in such a way that the cells in the bottom path $\rho$ never get deleted but such that the new top paths still have labels in $L(\calA_F,v_0)$. The number of cells in $D$ can be thought of as a measure of how different $W$ is from $W'$, and the process will terminate in a (degenerate) Van Kampen diagram consisting only of $\rho$. The result will then follow.

    Suppose we have a Van Kampen diagram $D(W',W)$ with $W$ reduced. An \emph{elementary deletion} will be one of the following operations:
    \begin{enumerate}[(a)]
        \item\label{it:remove-spur} \emph{Remove a spur $w$.} If there is a vertex $w$ of degree 1  other than $d_l$ or $d_r$, delete $w$ and the unique edge adjacent to $w$.
        \item\label{it:remove-e} \emph{Remove a free face $e$ with label in $\edges A$.} If there is an edge $e \in \partial D$ that is in the boundary of some 2-cell $C$ with label in $\edges A$, then delete $e$ and the interior of $C$.
        \item\label{it:remove-v} \emph{Remove a free face $e$ with label in $\gen v$ for some $v \in \verts A$.} If there is an edge $e \in \partial D$ that is in the boundary of some 2-cell $C$ coming from a relation of $\bbA_{[v]}$,  then delete $e$ and the interior of $C$.
    \end{enumerate}

    The presence of a spur implies that the word read around $\partial D$ is not freely reduced. Since $W$ is reduced, the spur cannot be covered by the path $\rho$. It follows that if there is a spur other than $d_l$ or $d_r$, $\mu(\rho')$ is not freely reduced. By successively removing spurs and taking new ``top'' boundary paths, we arrive at $\rho''$ such that $\mu(\rho'')$ is the free reduction of $\mu(\rho')$. By Lemma \ref{lem:reduction-sub} there is a loop $\hat \rho''$ in $\calA_F$ based at $v_0$ with label $\mu(\rho'')$.

    If we perform either of the elementary deletions \eqref{it:remove-e} or \eqref{it:remove-v}, we get a new Van Kampen diagram $D'$ which can be seen as a sub-diagram of $D$. Thus, $D'$ also comes with an embedding in the plane and a well-defined boundary word. This is depicted in Figure \ref{fig:deletion}.
    \begin{figure}[htb]
        \centering
        \includegraphics[width=\textwidth]{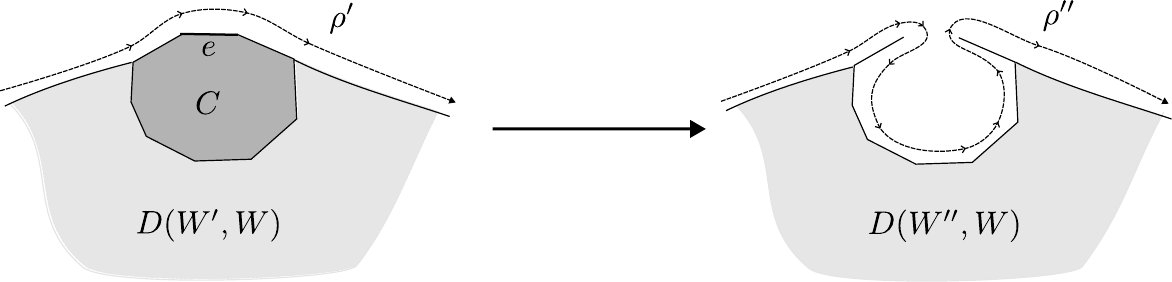}
        \caption{The new top boundary path $\rho''$ after applying elementary deletion \eqref{it:remove-e} or \eqref{it:remove-v}.}
        \label{fig:deletion}
    \end{figure}
    By Lemma \ref{lem:e-substitution} or Lemma \ref{lem:v-substition} (respectively), if the deleted edge $e$ lies in the path $\rho'$, then there is a closed loop in $\calA_F$ based at $v_0$ with label $\mu(\rho'')=W''$. We now proceed as follows:
    \begin{enumerate}
        \item Start with $D=D(W',W)$.
        \item\label{it:step-II} Do the first possible applicable elementary deletion below:
            \begin{enumerate}
                \item If there is a spur besides $d_l$ or $d_r$ remove it. This spur will never lie in the bottom path $\rho$ 
                \item If there is an edge $e$  with label in $\edges A$ that lies in $\rho'$ but not in $\rho$, delete it and the interior of the 2-cell $C$ that contains it using the elementary deletion \eqref{it:remove-e}.
                \item If there is an edge $e$  with label in $\gen v$ for an appropriate $v$ that lies in $\rho'$ but not in $\rho$, and $e$ is contained in a 2-cell $C$ coming from a relation in $\bbA_v$, delete it and the interior of the 2-cell that contains it using the elementary deletion \eqref{it:remove-v}.
            \end{enumerate}
        \item If this gives a new diagram $D(W'',W) \subset D(W',W) \subset \mathbb R^2$ with a new top boundary path $\rho''$, go back to step \ref{it:step-II} and repeat the process. Otherwise, terminate.
    \end{enumerate}

    If this process terminates after $n$ steps with a diagram without any 2-cells, then the diagram must be a tree with spurs $d_l$ and $d_v$, which means that $\rho^{(n+1)}=\rho$, and by repeated application of Lemmas \ref{lem:v-substition}, \ref{lem:e-substitution},  and \ref{lem:reduction-sub} we have that $\mu(\rho^{(n+1)}) = W$ is the label of a closed loop in $\calA_F$ based at $v_0$ as required.

    Suppose now that this process terminates but there are remaining 2-cells. We distinguish two cases.

    \textbf{Case 1:} \textit{One of the remaining 2-cells comes from a relation involving a symbol from $\edges A$.} We will argue that this situation is impossible. Recalling the set of relations $R$ given in \eqref{eqn:BS-eqns}, we see that any such 2-cell $C$ has exactly one pair of edges with label $e\in\edges A$ that we think of as the sides of the 2-cell. The other two paths forming $\partial C$ consist of edges with labels in $\gen{i(e)}$ and $\gen{t(e)}$. Such 2-cells combine to form \emph{$e$-strips} as depicted in Figure \ref{fig:e-strips}.
    \begin{figure}[htb]
        \centering
        \includegraphics[width=\textwidth]{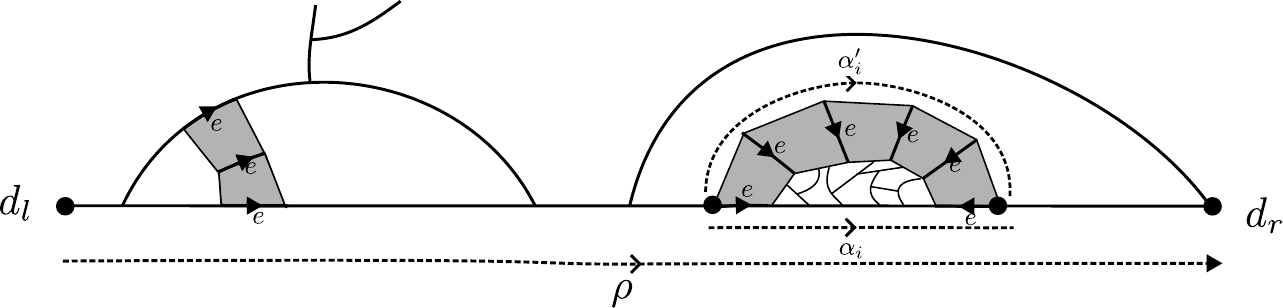}
        \caption{Two $e$-strips and a subpath $\alpha_i$ with a cancellable label.}
        \label{fig:e-strips}
    \end{figure}
    An $e$-strip $\mathcal S \subset D$ is \emph{innermost} if one of the components of $D \setminus \mathcal S$ contains no $e'$-strips for any $e' \in \edges A$.%\footnote{make e-strips touch the boundary a bit more} 

    \textbf{Claim:} \emph{If $\mu(\rho)$ is reduced in the sense of Definition \ref{defn:reduced}, then there are no $e$-strips whose extremities both lie in $\rho$.} Indeed, if such an $e$-strip existed, then there would be an innermost such strip. Looking at Figure \ref{fig:e-strips}, we see that if this happens, $\rho$ would have a subpath $\alpha_i$ such that there exists another path $\alpha_i'$, whose label has no symbols from $\edges A$, with the same endpoints. In particular the label of $\alpha_i$ is cancellable in the sense of Definition \ref{defn:reduced} contradicting the hypothesis that $W=\mu(\rho)$ is reduced. The claim now follows.

    Now, if the algorithm terminated in Case 1, then this means that every edge in $\rho^{(n+1)}$ is either not contained in a 2-cell, or it is an edge with label in $\gen v$ for some $v \in \verts A$ that is contained in some 2-cell coming from a relation involving a pair of symbols in $e \in \edges A$, i.e. lying in the "side" of an $e$-strip. Such 2-cells assemble into $e$-strips, and in this case, the extremities of all $e$-strips must lie in $\rho$, which contradicts the claim above. It follows that Case 1 cannot occur.

    \textbf{Case 2:} \textit{All remaining 2-cells come from the relations in the $\bbA_v, v\in \verts A$.} In this case, since none of the edges of $\rho^{(n+1)}$ lie in any 2-cells, we must be in a situation as depicted in Figure \ref{fig:baloons}.
    \begin{figure}[htb]
        \centering
        \includegraphics[width=\textwidth]{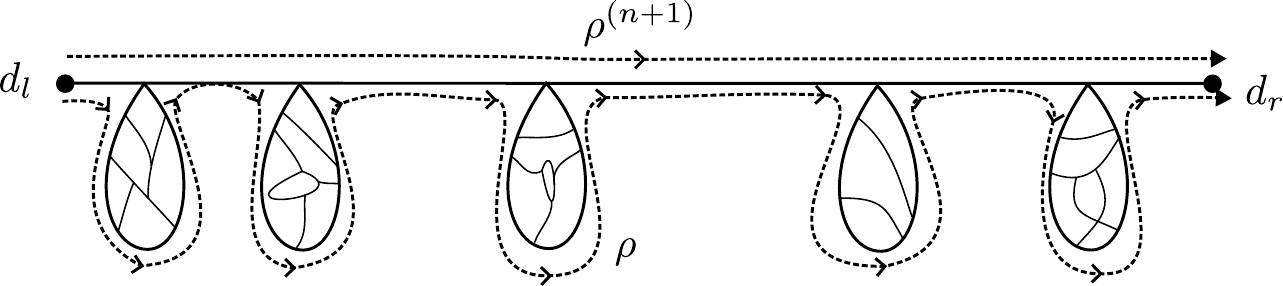}
        \caption{When none of the edges of $\rho^{(n+1)}$ lie in the boundary of a 2-cell.}
        \label{fig:baloons}
    \end{figure}
    A first consequence of Case 2 is that all edges with labels in $\edges A$ must lie in the path $\rho^{(n+1)}$. Furthermore, since both $\mu(\rho)$ and $\mu(\rho^{(n+1)})$ represent elements in $\pi_1(\bbA,v)$, deleting all edges with labels in $\edges A$ separates $D$ in such a way that for every connected component, there is some $v \in \verts A$ such that all edges in that component have a label in $\gen v$. These components also coincide with the $\verts A$-components of $\rho^{(n+1)}$. Let $\alpha_i$ be a $\verts A$-components of $\rho^{(n+1)}$, and consider the subpath $\alpha'_i$ of $\rho$ depicted in Figure \ref{fig:comp-switch}.
    \begin{figure}[htb]
      \centering
      \includegraphics[width=0.5\textwidth]{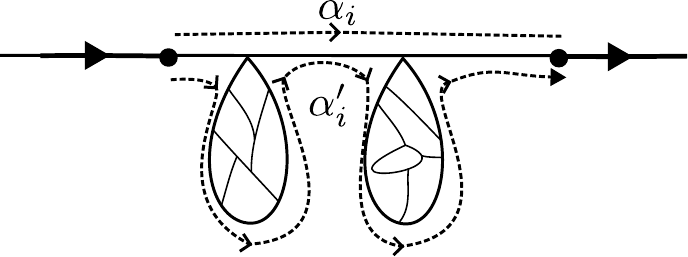}
      \caption{Subpaths of $\rho^{(n+1)}$ and $\rho$ in the same $\verts A$-component with the same evaluations.}
      \label{fig:comp-switch}
    \end{figure}
    
    Seeing as $\alpha_i,\alpha_i'$ bound a van Kampen (sub) diagram, we have $\epsilon(\alpha_i)=\epsilon(\alpha_i')$, and by repeatedly applying Lemma \ref{lem:v-substition}, if $\mu(\rho^{(n+1)})=W^{(n+1)}$ is the label of a based loop in $\pi_1(\calA_F,v_0)$, then so is $W=\mu(\rho)$.

    Since Case 1 and Case 2 exhaust all possibilities, the result follows.    
  \end{proof}

    \begin{proof}[Proof of Theorem \ref{thm:main}]
        The result now follows from Proposition \ref{prop:read-reduced} and Proposition \ref{prop:algo-time}.
    \end{proof}

  \section{Applications}\label{sec:applications}

  \subsection{Reducing words}\label{sec:reduce}
  Given a word $W$ in $(X^{\pm 1})^*$, there is a naive algorithm to pass to a reduced form by reading through $W$ and making the substitution $e^{\pm  1} h e^{\mp 1} \mapsto h'$, where $e \in \edges A$ and where $e^{\pm  1} h e^{\mp 1}= h'$ is a relation of $\Bass(\bbA)$, and then starting over until no more cancellable $\edges A$-letters remain. This algorithm runs in quadratic time in the word length. We propose a faster folding-based algorithm.

  \begin{prop}\label{prop:quick-reduce}
    Suppose that a graph of groups $\bbA$ is explicitly given as in Section \ref{sec:precise} and such that for every vertex group, every element lying in the images of some incident edge group is assigned a symbol as a generator of that vertex group. Then there is an algorithm that takes as input a word $W \in (X^{\pm 1})^*$ representing an element of $\pi_1(\bbA,v)$ and outputs a reduced word $\bar W$ (in the sense of Definition \ref{defn:reduced}) such that $\epsilon(W) = \epsilon(\bar W)$ which runs in time $O(M\log^*(M))$ where $M = |W|$.
  \end{prop}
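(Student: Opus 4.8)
The plan is to reuse the saturation-and-folding engine of Section~\ref{sec:algo}, but applied to a single \emph{path} instead of a bouquet of loops, so that the specific element $\epsilon(W)$ is preserved rather than a whole subgroup. Concretely, I would first build the linear $X$-labelled graph $P$ whose label reads $W$, marking its initial vertex $v_s$ and terminal vertex $v_e$; then run vertex saturation (step~\ref{it:vert-saturation}) and edge saturation (step~\ref{it:edge-saturation}), and finally the folding algorithm of \cite{touikan_fast_2006} (step~\ref{it:fold}) while keeping $v_s$ and $v_e$ marked throughout. The running-time bookkeeping of Section~\ref{sec:algo-time} then applies essentially unchanged: for fixed $\bbA$ the folded graph $\calA_F$ is produced in time $O(n\log^*(n))$ with $n=|W|$, the $\log^*$ factor being inherited from the disjoint-set analysis of \cite{touikan_fast_2006}. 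The hypothesis that each element of every incident edge-group image is itself a generator symbol is exactly what keeps the saturation loops short and, more importantly, guarantees that the result of transferring an edge-group element across an edge is a \emph{single} symbol, which is what makes the extraction below run in linear time.

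For correctness, observe that the folding morphism preserves labels, so the image $\bar P$ of $P$ is a walk from $v_s$ to $v_e$ in $\calA_F$ with $\mu(\bar P)=W$, and hence $\epsilon(\bar P)=\epsilon(W)$. I would then extract a reduced representative by repeatedly applying the substitution machinery already developed: a free backtrack is removed using Lemma~\ref{lem:reduction-sub}, while a cancellable subpath $e*\alpha*e^{-1}$ with $\epsilon(\mu(\alpha))=t_e(h)\in t_e(\bbA_e)$ is replaced by a single $\gen{i(e)}$-symbol representing $i_e(h)$, via the Bass--Serre relation $e\,t_e(h)\,e^{-1}=i_e(h)$ (and symmetrically for $e^{-1}Ve$). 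Each such move strictly lowers the word length or the syllable length while preserving the evaluation, so the process terminates in a word $\bar W$ that is freely reduced and free of cancellable subwords, i.e. reduced in the sense of Definition~\ref{defn:reduced}, with $\epsilon(\bar W)=\epsilon(W)$. That a word admitting no further move really is of minimal syllable length, and hence reduced, is exactly the content of Lemma~\ref{lem:reduced-minmal}; the geometric reason that these moves are available inside $\calA_F$ --- namely that the full folded Cayley copy in each vertex component forces the two $e$-edges of a cancellable subpath to share a terminal vertex and so to coincide --- is the reverse of the $e$-strip analysis carried out in the proof of Proposition~\ref{prop:read-reduced}.

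To carry out the extraction in linear time I would process $\bar P$ once from left to right using a stack of syllables. Reading a vertex-group letter multiplies it into the finite group element currently exposed at the top of the stack; reading an edge letter $e^{\epsilon}$ first tests whether it cancels against the edge on top --- same underlying edge, opposite orientation, and intervening vertex element lying in the relevant edge-group image --- and, if so, pops the two edge entries, transfers the edge-group element to the single symbol $i_e(h)$ or $t_e(h)$, and multiplies it into the vertex element now exposed below; otherwise it pushes $e^{\epsilon}$. Since a cancellable bracket $e\cdots e^{-1}$ encloses only vertex-group letters, this single pass resolves all nested cancellations correctly, every symbol is pushed and popped at most once, and each membership test, transfer, and vertex multiplication is $O(1)$ for fixed $\bbA$ (read from the finite multiplication tables and the attached Cayley graphs); the extraction is therefore $O(n)$ and is dominated by the folding step, giving the claimed $O(n\log^*(n))$ bound, after which each surviving vertex slot is written out as a fixed bounded word over $\gen v^{\pm1}$ to produce $\bar W$. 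The step I expect to be the main obstacle is the correctness of this single left-to-right pass: one must verify that merging a transferred element back onto the stack can never create, further to the left, a cancellable configuration that the pass has already stepped past. This is precisely where the observation that cancellable brackets enclose only vertex-group letters does the real work, and it must be argued carefully --- by an induction on the nesting structure of the cancellable subwords --- to conclude that the stack genuinely terminates with a word of minimal syllable length.
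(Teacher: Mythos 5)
Your proposal is correct, and its first half (build the linear labelled graph for $W$ with marked endpoints, vertex- and edge-saturate, fold, keep the endpoints marked) coincides exactly with the paper's construction of $(\calW_F,v_0)$ and its running-time accounting. Where you genuinely diverge is in how the reduced word is extracted from the folded graph. The paper simply takes a \emph{shortest} path $\rho$ from $v_0$ to $u_0$ in $\calW_F$ and proves by contradiction that its label must already be reduced: a cancellable subpath could, by Lemma \ref{lem:v-substition} (using the hypothesis that edge-group images are single generators) and the argument of Lemma \ref{lem:e-substitution}, be compressed to a single letter, contradicting minimality; it then uses $\pi_1(\calW_F,v_0)=\{1\}$ to conclude that this shortest path has the same evaluation as the image of the original path. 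You instead start from the image $\bar P$ of the original path (so evaluation is preserved move by move and the triviality of $\pi_1(\calW_F,v_0)$ is never needed) and perform the rewriting explicitly with a left-to-right stack pass. Your route is sound --- the stack scan is the classical Britton-style normal form computation, and your worry about configurations being ``stepped past'' is resolved by noting that after a pop-and-transfer the entry below the merged vertex element is an edge letter whose potential cancellation partner has not yet been read, so every cancellable bracket is still tested when its closing edge letter arrives --- but it obliges you to argue termination and the correctness of the single pass, which the paper's shortest-path trick avoids entirely. What your approach buys in exchange is worth noting: since the stack scan only needs multiplication tables for the finite vertex groups, membership tests in edge-group images, and the transfer maps (all $O(1)$ for fixed $\bbA$), it actually yields an $O(n)$ reduction algorithm on the word itself, with the folding serving only as scaffolding you could discard; the paper's version keeps the folding central and inherits the $\log^*$ factor, but is shorter to verify.
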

  
  \begin{proof}
    We note that our requirement on the presentation ensures that all the relations in \eqref{eqn:BS-eqns} have length 4. The algorithm is a slight variation of the folding algorithm. As we present the algorithm, we will provide analysis.

    Start by constructing a based linear directed $X$-labelled graph $(\calW_0,v_0)$ with endpoints $v_0$ and $u_0$ so that the unique simple path $\rho_0$ from $v_0$ to $u_0$ has label $W$. We note that $\pi_1(\calW_0,v_0) = \{1\}$. We now treat $(\calW_0,v_0)$ exactly as the graph constructed in step \ref{it:form-bouquet} of the folding algorithm and proceed to perform the folding algorithm and arrive at a terminal based graph $(\calW_F,v_0)$.

    Following Section \ref{sec:correct}, we see that $\pi_1(\calW_F,v_0) = \pi_1(\calW_0,v_0) = \{1\}$. Recall that we have a continuous map $\calW_0 \to \calW_F$. Following our notation convention, we denote the images $v_0$ and $u_0$ in $\calW_0$ in $\calW_F$ by the same symbols $v_0$ and $u_0$ respectively. Let $\rho$ be a shortest path in $\calW_F$ from $v_0$ to $u_0$. We claim that the label $\mu(\rho)$ is reduced in the sense of Definition \ref{defn:reduced}. Indeed, suppose that was not the case. Then $\rho = \rho_1*\beta*\rho_2$, where $\mu(\beta) = e^{\pm 1}U(\gen{v}^{\pm 1})e^{\mp 1}$ is cancellable. This means that $U(\gen{v}^{\pm 1})$ evaluates to an element in the image of an edge group, so by Lemma \ref{lem:v-substition}, we can replace $\beta$ with $\beta'$ with the same endpoints and where $\mu(\beta')= e^{\pm 1}a_Ue^{\mp 1}$, with $a_U$ being a single letter representing an element of the incident edge group. Because of edge saturation and the argument of Lemma \ref{lem:e-substitution}, we know that we can replace $\beta'$ by $\beta''$, where $\mu(\beta'') = b_U$, which consists of a single symbol, and $e^{\pm 1}a_Ue^{\mp 1}=b_U$ is an identity in $\Bass(\bbA)$. We note that $\rho' = \rho_1*\beta''*\rho_2$ is strictly shorter than $\rho$ but still joins $v_0$ to $u_0$, which is a contradiction. It follows that $\mu(\rho)$ must be reduced.

    Let $\bar\rho_0$ be the image of the path $\rho_0$ in $\calW_0$ in $\calW_F$. Then $\bar\rho_0$ also joins $v_0$ to $u_0$, so $\epsilon(\bar\rho_0*\rho^{-1}) \in \pi_1(\calW_F,v_0)=\{1\}$. Therefore $\epsilon(\rho_0)=\epsilon(\rho)$, and $\mu(\rho)$ is the desired reduced form.

    We finally note that $\rho$ can be found from $\calW_F$ by performing a breadth-first search (BFS) rooted at $v_0$ and directing the tree so that descendants point to predecessors. Since $\calW_F$ is encoded as an adjacency list (each vertex has its list of adjacent edges and each edge has its list of endpoints), the algorithm described in \cite[\S 22.2]{cormen_introduction_2009} and its running time analysis is applicable. Since we also record the edges joining vertices and predecessors in the BFS tree along the way, once $u_0$ is found the path $\rho^{-1}$ can constructed by repeatedly taking predecessors. Thus, the running time analysis in \cite[\S 22.2]{cormen_introduction_2009} tells us that finding $\rho$ can be done in time $O(V+E)$ where $V,E$ represent the number of vertices, edges (respectively) in the graph $\calW_F$.\footnote{Note that here, since we de not need to calculate distance, we do not need to do any arithmetical operations. Thus the algorithm truly only uses $O(V+E)$ pointer operations.} For a fixed $\bbA$, $V+E$ is bounded above by a constant multiple of $M$ and the result follows.

  \end{proof}

\begin{cor}\label{cor:vf_fast}
    Let $G=\langle Y |S \rangle$ be fixed presentation of a virtually free group. Then there exists an algorithm that takes as input a word $U \in (Y^\pm 1)^*$ and a tuple of words $(W_1,\ldots,W_n)$ in $(Y^{\pm 1})^*$ and decides whether $\epsilon(U) \in \langle \epsilon(W_n),\ldots,\epsilon(W_n)\rangle$ in time $O(N \log^*(N))$ where $N = |U|+|W_1|+\cdots+|W_n|$.
  \end{cor}

\begin{proof}
    By hypothesis, $G$ is isomorphic to the fundamental group $\pi_1(\bbA,v_0)$ of a graph of groups with finite vertex groups. Let $\bbA$ be explicitly given as in Section \ref{sec:precise}. We can also require it to fulfill the hypotheses of the statement of Proposition \ref{prop:quick-reduce} as this can be achieved by obvious Tietze transformations. Such an isomorphism $\phi:G \to \pi_1(\bbA,v_0)$ is induced by a mapping $\phi|_Y:Y \to \pi_1(\bbA,v_0)\leq \Bass(\bbA)$. It follows that if such a $\phi|_Y$ is given, then setting $K = \max\left\{\left|\phi|_X(x)\right|_{Z} \mid x \in X\right\}$, where $X$ a generating set for $\Bass(\bbA)$ as in the statement of Theorem \ref{thm:main}, by substituting the symbols in $W \in (Y^{\pm 1})^*$ by the appropriate words in $(X^{\pm 1})^*$ we find that the image $\phi(\epsilon(W))$ can be effectively be represented as a word in $(X^{\pm 1})^*$ of length at most $K|W|$.
    
    The remaining issue is that this rewriting may not be freely reduced and, even after free reductions, may not be reduced in the sense of Definition \ref{defn:reduced}. By Proposition \ref{prop:quick-reduce} there is an algorithm that will produce a reduced form for the image of $W$ in time $O(K|W|\log^*(K|W|))$. The result now follows immediately from Theorem \ref{thm:main} and the fact that $\log^*(K|W|) < \log^*(K)+\log^*(|W|)$
\end{proof}

  \subsection{Detecting freeness and equality of subgroups}\label{sec:detect}

This proof of the next result uses actions on Bass-Serre trees.    
    
    \begin{prop}\label{prop:detect-free}
      $H \leq \pi_1(\bbA,v_0)$ is a free group if and only if every $\verts A$-component $W$ is isomorphic to a copy of the Cayley graph $\Gamma_{[w]}$, where $w \in \verts W$. Furthermore, if $H$ is given by a tuple $(W_1,\ldots,W_s)$ of words in the generating set of $\Bass(\bbA)$, then  freeness of $H$ can be determined by modifying the folding algorithm in a way that only incurs an $O(N)$ overhead cost where $N$ is the sum of the lengths of the input words.  
    \end{prop}

    \begin{proof}
      An element $g\in \pi_1(\bbA,v)$ fixes a point in the dual Bass-Serre tree if and only if it is conjugate to one of the vertex groups $\bbA_v$ for some $v \in \verts A$. Thus, $H$ contains such an element if and only if there is a loop in $(\calA_H,v_0)$ with reduced label $W f W^{-1}$, where $f \in \bbA_v\setminus \{1\}$. It follows that if $\rho$ is a path in $(\bbA,v)$ starting at $v$ with label $W$, then there must be a loop at the endpoint of $\rho$ with label $f$. Thus  $W f W^{-1}$ is non-trivial if and only if $f \neq 1$, and such an element exists if and only if some $\verts A$-component of $\calA_H$ is not a Cayley graph of a vertex group, but rather a proper quotient of such a Cayley graph so that it contains a loop with a label that doesn't evaluate to the identity.

      Since $\bbA$ is a graph of finite groups, if some $g \in H$ fixes a vertex in the Bass-Serre tree, then that element has finite order and $H$ is not free. If there are no such elements, then $H$ acts freely on $T$ and therefore is free. We now prove the second statement.

      Whether or not $H$ is free can be verified by modifying the folding algorithm as follows: create a new list called \texttt{VERIFICATION} and append to it every vertex of $\calA_0$, we can modify data structures so that the same objects lie in multiple lists. Once the folded graph $\calA_F$ is produced, we go through the list \texttt{VERIFICATION}. For each vertex in the $v$ list, find its image $\bar v$ in $\calA_F$ (this involves a \emph{find} operation, see Section \ref{sec:orig-algo-description}), attach a copy of $\Gamma_{[v]}$ to $\bar v$ and fold (this takes $O(1)$ operations). Again we can modify the data structure so that if two vertices of vertices of the copy of $\Gamma_{[v]}$ we just added get identified we break the loop and declare $H$ to be non-free, the extra cost per vertex in the list is again $O(1)$ disjoint set and doubly linked list operations. Since this is repeated at most $N$ times (the initial length of the list \texttt{VERIFICATION})  determining whether $H$ is free involves an $O(N)$ cost overhead.
    \end{proof}

    An immediate application of the Theorem \ref{thm:main} and Proposition \ref{prop:quick-reduce} is:
    
    \begin{prop}\label{prop:detect-equal}
      Let $(K_1,\ldots, K_r)$ and $(W_1,\ldots,W_s)$ be two tuples of words in $X$. Then there is an algorithm that runs in time  $O(N \log^*(N))$, where $N$ is the total length of all the words in the input and determines whether the subgroups $\bk{K_1,\ldots, K_r}$ and $\bk{W_1,\ldots,W_s}$ are equal.
    \end{prop}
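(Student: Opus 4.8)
The plan is to reduce the equality test to a pair of subgroup-inclusion tests, each of which I can settle quickly by combining the fast folding of Theorem \ref{thm:main} with the fast reduction of Proposition \ref{prop:quick-reduce}. Write $H_K = \bk{\epsilon(K_1),\ldots,\epsilon(K_r)}$ and $H_W = \bk{\epsilon(W_1),\ldots,\epsilon(W_s)}$. Because these are subgroups, $H_K = H_W$ if and only if $H_K \leq H_W$ and $H_W \leq H_K$, and $H_K \leq H_W$ holds if and only if every generator $\epsilon(K_i)$ lies in $H_W$ (and symmetrically for the reverse inclusion). So it suffices to perform $r+s$ membership tests: the $r$ tests ``$\epsilon(K_i) \in H_W$?'' and the $s$ tests ``$\epsilon(W_j) \in H_K$?''.

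First I would run the folding algorithm of Section \ref{sec:algo} twice: once on input $(W_1,\ldots,W_s)$ to produce a completely folded graph $(\calA_{H_W},v_0)$, and once on input $(K_1,\ldots,K_r)$ to produce $(\calA_{H_K},v_0)$. By the running-time analysis in Section \ref{sec:algo-time}, each construction costs $O(n\log^*(n))$ for the fixed $\bbA$, where $n$ is the total length of all input words. Next, for each $K_i$ I would apply Proposition \ref{prop:quick-reduce} to compute a reduced word $\bar K_i$ with $\epsilon(\bar K_i)=\epsilon(K_i)$ in time $O(|K_i|\log^*(|K_i|))$, and symmetrically compute $\bar W_j$ for each $j$. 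Since each $|K_i|\leq n$ forces $\log^*(|K_i|)\leq \log^*(n)$, the sum of these per-generator costs is at most $\log^*(n)\sum_i|K_i| = O(n\log^*(n))$, so all reductions together stay within budget.

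The membership tests themselves are then cheap. By Proposition \ref{prop:read-reduced}, $\epsilon(K_i)\in H_W$ if and only if the reduced word $\bar K_i$ can be read as a loop based at $v_0$ in $\calA_{H_W}$. Since $\calA_{H_W}$ is completely folded, at each vertex there is at most one adjacent edge with a given label and incidence, so attempting to trace $\bar K_i$ is a deterministic walk from $v_0$ that either gets stuck at some letter (whence $\epsilon(K_i)\notin H_W$) or runs to completion, after which I check whether the walk returned to $v_0$. With the vertex degree uniformly bounded by $2|X|$ for the fixed $\bbA$, each step is $O(1)$, so tracing $\bar K_i$ takes $O(|\bar K_i|)=O(|K_i|)$ time; tracing all the $\bar K_i$ against $\calA_{H_W}$ and all the $\bar W_j$ against $\calA_{H_K}$ costs $O(n)$ in total. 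The algorithm reports ``equal'' precisely when all $r+s$ traces close up into loops at $v_0$, and the whole procedure runs in $O(n\log^*(n))$.

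I expect no genuine obstacle here, as the substance is already carried by Theorem \ref{thm:main} and Proposition \ref{prop:quick-reduce}. The two points that require care are (i) that one must \emph{reduce} each tested element before reading it off, since an unreduced representative of an element of the subgroup may fail to trace as a loop in $\calA_F$, so skipping the call to Proposition \ref{prop:quick-reduce} would make the test unsound; and (ii) the routine bookkeeping that the sum of the per-generator $O(\ell\log^*(\ell))$ costs collapses into the global $O(n\log^*(n))$ bound, which follows from $\log^*$ being monotone together with $|K_i|,|W_j|\leq n$.
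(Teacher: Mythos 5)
Your proposal is correct and follows essentially the same route as the paper's proof: reduce equality to the two inclusions, replace all generators by reduced forms via Proposition \ref{prop:quick-reduce}, build both folded graphs, and check membership of each reduced generator by tracing it in the opposite folded graph, with the same $O(n\log^*(n))$ accounting. Your write-up merely spells out in more detail the deterministic-walk membership check and the summation of per-generator costs, which the paper leaves implicit.
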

    \begin{proof}
      Using Proposition \ref{prop:quick-reduce} in time $O(N \log^*(N))$, we can replace all the $K_i,W_j$ by reduced forms. We now perform the folding algorithm, again in time $O(N \log^*(N)$. Once a folded graph is constructed, determining whether some reduced word of length $n$ represents an element of the subgroup takes time $O(n)$. Since the sum of the lengths of the $K_i,W_j$ add up to $N$, verifying whether $K_i \in \bk{W_1,\ldots,W_s}$ and $W_j \in \bk{K_1,\ldots, K_r}$ for all $K_i,W_j$, takes an additional $O(N)$. This finally establishes whether or not both inclusions \[\bk{K_1,\ldots, K_r} \leq \bk{W_1,\ldots,W_s} \textrm{~and~}  \bk{W_1,\ldots,W_s} \leq \bk{K_1,\ldots, K_r}\] hold.
    \end{proof}
    
    \subsection{The proof of Theorem \ref{thm:intro}}\label{sec:gl2z}

We have that $\GL{2,\mathbb Z} \approx D_6*_{D_2}*D_4$, where $D_n$ denotes the dihedral group with $n$ elements, which we can express as a graph of groups where the underlying graph $A$ has vertices $u,v$ and a single edge $e$, with $i(e)=u$ and $t(e)=v$. We take $\bbA_u \approx D_6$ and $\bbA_v \approx D_4$. Now, following \cite[$\S$I.5]{dicks_groups_1989}, we  find %\footnote{original was "have that" - that could be better wording} 
$A=\begin{pmatrix}1&-1\\0&-1\end{pmatrix} \in D_6, B=
\begin{pmatrix}
  1&0\\0&-1
\end{pmatrix} \in D_4$, and $C=
\begin{pmatrix}
  0&1\\1&0
\end{pmatrix} \in D_6\cap D_4 = D_2$. Expressing these as elements of $\pi_1(\bbA,v)$, we write $A=a\in \bbA_u$, $B=ebe^{-1}, b \in \bbA_v$ and $C=c = ec'e^{-1}$ where $c\in \bbA_u$ and $c'\in \bbA_v$. Just as in Section \ref{sec:sl2z-eg}, we can find a presentation $\Bass(\bbA) = \langle X | R \rangle$ satisfying the requirements of Section \ref{sec:precise} where $a \in \gen u \subset X \supset \gen v \ni b$.

\begin{lem}\label{lem:matrix-length}
    Let \[M = \begin{pmatrix}
        a&b\\c&d
    \end{pmatrix} \in \GL{2,\bb Z}\] and let $\GL{2,\mathbb Z} = \pi_1(\bbA,u)$ where $\bbA$ is as given above and $\Bass(\bbA)=\langle X | R \rangle$ conforms to the requirements set out in Section \ref{sec:precise}. Then $M$ can be represented as a word of length $O(|a|+|b|+|c|+|d|)$ in $(X^{\pm 1})^*$
\end{lem}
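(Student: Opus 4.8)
The plan is to reduce the statement to a purely matrix-theoretic fact about word length in a fixed finite generating set of $\GL{2,\mathbb Z}$, and then to bound that word length with the Euclidean algorithm. Fix the elementary matrices
\[
S=\begin{pmatrix}0&-1\\1&0\end{pmatrix},\qquad T=\begin{pmatrix}1&1\\0&1\end{pmatrix},
\]
together with the finitely many diagonal and antidiagonal matrices with entries in $\{\pm1\}$. Each of these is a specific element of $\GL{2,\mathbb Z}=\pi_1(\bbA,u)$, so (exactly as $A$, $B$, $C$ were rewritten above) each equals a fixed word in $(X^{\pm1})^*$ of length at most some constant $C$ depending only on the chosen presentation. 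Consequently, if I can exhibit $M$ as a product of $\ell$ of these elementary matrices, then substituting the fixed words and concatenating produces a word in $(X^{\pm1})^*$ representing $M$ of length at most $C\ell$. It therefore suffices to prove that $M$ is a product of $O(|a|+|b|+|c|+|d|)$ elementary matrices.

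First I would write $M$ as a product of elementary matrices using the continued-fraction (Stern--Brocot) form of the Euclidean algorithm. After left- and right-multiplying by $O(1)$ of the fixed sign and permutation matrices I reduce to the case where $M$ has determinant $+1$ and a convenient sign configuration, so that the reduction to the identity can be carried out purely by the \emph{subtractive} row and column operations $M\mapsto MT^{\pm1}$ and $M\mapsto MS$. The crucial feature of running the algorithm subtractively is that these operations never increase the entries, so every intermediate matrix has entries bounded by $\max(|a|,|b|,|c|,|d|)$; the total number of $T^{\pm1}$ operations performed is exactly the sum of the partial quotients $q_1,q_2,\dots$ of the continued fraction that the Euclidean algorithm computes from the entries of $M$, while the number of $S$ factors is the (smaller) number of quotients.

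The key estimate, and the step I expect to be the main obstacle, is the linear bound on the total exponent $\sum_i q_i$. Writing the remainders produced by the Euclidean algorithm as $r_0\ge r_1\ge\cdots\ge r_m\ge1$ with $r_{i-1}=q_i r_i+r_{i+1}$, the identity $q_i r_i=r_{i-1}-r_{i+1}$ telescopes to
\[
\sum_{i} q_i r_i=(r_0+r_1)-(r_m+r_{m+1})\le r_0+r_1=O(|a|+|b|+|c|+|d|),
\]
and since each remainder satisfies $r_i\ge1$ we obtain $\sum_i q_i\le \sum_i q_i r_i=O(|a|+|b|+|c|+|d|)$. This is exactly the place where one must resist the tempting two-phase strategy of first clearing one column and only afterwards clearing the surviving off-diagonal entry: in that approach the remaining entry can be as large as a product of two entries of $M$, giving only a quadratic bound, whereas the monotone subtractive reduction keeps all intermediate entries — and hence the cost of every individual step — under control. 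Combining this estimate with the first paragraph, $M$ is a product of $O(|a|+|b|+|c|+|d|)$ elementary matrices, hence is represented by a word of length $O(|a|+|b|+|c|+|d|)$ in $(X^{\pm1})^*$, as claimed.
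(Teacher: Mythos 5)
Your argument is correct and rests on the same engine as the paper's proof --- run the Euclidean algorithm and bound the sum of the partial quotients linearly via the telescoping identity $q_i r_i = r_{i-1}-r_{i+1}$ --- but you organize the reduction differently. The paper works in two phases with \emph{row} operations: it first clears the first column using powers of $F=CEC$, arriving at an upper-triangular matrix with units on the diagonal and top-right entry $n$, and then clears $n$ at a cost of $|n|$ further letters; the linearity of the total bound therefore hinges on a separate determinant estimate (from $ad'-br=\pm 1$ one gets $|d'|\le |d|+2$ at each quotient step), which yields $|n|\le \max\{|b|,|d|\}+2(|a|+|c|)$. You instead keep the whole matrix under control in a single pass by the subtractive Stern--Brocot descent, so no residual entry ever appears. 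Both routes give the linear bound; yours trades the determinant estimate for two facts that you assert rather than prove, namely that an $O(1)$ sign/permutation normalization places $M$ in the nonnegative monoid generated by the elementary matrices, and that the greedy subtractive reduction preserves nonnegativity (hence monotonicity of the entries). These are standard but deserve a sentence each.

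One aside in your write-up is wrong, and it is worth flagging precisely because it dismisses the strategy the paper actually uses: you claim that first clearing one column and only then the surviving off-diagonal entry ``gives only a quadratic bound'' because that entry ``can be as large as a product of two entries of $M$.'' It cannot. The row operations preserve the determinant, and once the matrix is triangular with units on the diagonal, the relation $ad'-br=\pm 1$ pins the growth of the second column to an additive constant per quotient step; summed over at most $|a|+|c|$ steps this keeps the residual entry linear in $|a|+|b|+|c|+|d|$, which is exactly how the paper's proof concludes. So the two-phase strategy is not an obstacle --- it merely requires that extra estimate --- and your own single-phase argument is an alternative to it, not a necessary repair of it.
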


\begin{proof}
We denote $BA = E =
\begin{pmatrix}
  1&-1\\
  0&1
\end{pmatrix}$, and take the inverse $E^{-1}=
\begin{pmatrix}
  1&1\\0&1
\end{pmatrix}$. As elements of $\pi_1(\bbA,u)$, we write \[E=ebe^{-1}a, E^{-1}=a^{-1}eb^{-1}e^{-1},\]
where $a, b$ as defined above.

Multiplying $M$ by $E^{\pm 1}$ on the left will correspond to adding or subtracting the second row from the first, and multiplying $M$ by $C$ on the left will correspond to swapping the rows of $M$. Thus these are sufficient to implement integer row reduction.  We row-reduce $M$ to the identity matrix while keeping track of the number steps required.

The first objective is to use the Euclidean algorithm to find some matrix $N_1$ such that:\[
  N_1
  \begin{pmatrix}
    a&b\\c&d
  \end{pmatrix}
  =
  \begin{pmatrix}
    \gcd(a,c) & n \\ 0 &u
  \end{pmatrix},
\] for some $n,u$. If $a$ or $c$ are zero then $N_1 = I_2$ or $C$ and we can skip ahead to \eqref{eqn:upper_triangular} below. Otherwise, up to multiplying on the left by some product length at most 3 of $B,C ,-I_2$, we can assume that $0<a<c$. By the division algorithm, there is $0\leq r < a$ such that $c = aq+r$.

Take $F = CEC =
\begin{pmatrix}
 1&0\\-1&1 
\end{pmatrix}.$ We have\[
  F^qM=
  \begin{pmatrix}
    a & b\\
    c-qa &d-qb
  \end{pmatrix} =
  \begin{pmatrix}
    a&b\\r&d'
  \end{pmatrix}.
\] We now consider the  absolute values of the entries of the resulting matrix. The entries $a$ and $b$ will be unchanged in $F^qM$, and by hypothesis, $0 \leq c-qa < a < c$. Furthermore, we have that $c-r =qa \geq q$. Thus the sum of the absolute values of the entries in the first column decreased by at least $q$ after multiplication by $F^q$.

We now look at the second column. It suffices to compare $|d|$ to $|d'|$. On the one hand, we have\[
  ad-bc = \pm 1 \textrm{~and~} ad'-br=\pm 1,
\] since $\det(F^qM)= \pm 1$. This gives  $|d'| \leq 1+ \left|\frac{br}{a}\right|$ and $|\frac{bc}{a}|-1 \leq |d|$, and since $|a|>0$ and $|c|\geq |r|$, we have that $|d'| \leq |d|+2$. So the sum of the absolute values of the entries in the second column increased by at most 2.

To continue the Euclidean Algorithm, we would multiply by $C$ on the left to interchange the rows and repeat the process until we get the desired matrix 
\[N_1 = F^{q_s}C\cdots C F^{q_1}.\] Repeating the argument given above we see that in passing from $M$ to $N_1M$ the sum of the absolute values of the entries in the first column decreased by at least $q_1+\cdots+q_r$ thus we have\[
s \leq \sum_{i=1}^s q_i \leq |a|+|c|
\] and it follows that $N_1$ can be represented by a word in $(X^{\pm 1})^*$ of length $O(|a|+|c|)$. We now have \begin{equation}\label{eqn:upper_triangular}N_1M=\begin{pmatrix}
    \gcd(a,c) & n \\ 0 &u
  \end{pmatrix} \in \GL{2,\mathbb Z},
\end{equation}
thus $\gcd(a,c),u \in \{-1,1\}$. As for the top right entry $n$, note that by repeating the argument above, we see that the absolute values of the entries in the second column increased by at most $2s \leq 2|a|+2|c|$ which easily gives \[n \leq \max\{|b|,|d|\}+2|a|+2|c| \leq 2(|a|+|b|+|c|+|d|).\] So, after perhaps multiplying by $B$ to make the bottom right entry positive, we have\begin{equation}\label{eqn:tight}
    E^{n}
    \begin{pmatrix}
      \pm 1 & n\\ 0 &1
    \end{pmatrix}=
    \begin{pmatrix}
      \pm 1 & 0 \\ 0& 1
    \end{pmatrix} \in D_4.
  \end{equation}

  It follows that a matrix $M=
  \begin{pmatrix}
    a&b\\c&d
  \end{pmatrix} \in\GL{2,\mathbb Z}$ can be expressed as a word $W \in (X^{\pm 1})^*$  representing an element of $\pi_1(\bbA,u)$ with $|W| = O(|a|+|b|+|c|+|d|)$ as required.
\end{proof}

We note that \eqref{eqn:tight} tells us that this linear upper bound is also a lower bound so Lemma \ref{lem:matrix-length} is sharp. Finally, we can prove the first stated theorem.

\begin{proof}[Proof of Theorem \ref{thm:intro}]
    By Lemma \ref{lem:matrix-length} we can represent the matrices in $\GL{2,\bb Z}$ as $\bbA$-loops based at $u$ whose length is at most a multiple of the sum of the absolute values of the coefficients. Furthermore, the number of arithmetic operations needed is also a multiple of the sum of the absolute values of the coefficients. The result now follows from Proposition \ref{prop:quick-reduce}, and Theorem \ref{thm:main}.
\end{proof}

\bibliography{SFPVF}
\bibliographystyle{alpha}

\end{document}